\documentclass{amsart}
\usepackage[utf8]{inputenc}

\usepackage{amsfonts}
\usepackage{amsmath}
\usepackage{amssymb}
\usepackage{amsthm}
\usepackage[margin = 1in]{geometry}
\usepackage{todonotes}
\usepackage{url}
\usepackage{cancel} 
\usepackage{forest}
\usepackage{lscape}
\usepackage{hyperref}
\usepackage{enumitem}   
\usepackage{stmaryrd } 
\usepackage{MnSymbol}  
\usepackage{thmtools}   
\usepackage{thm-restate}  

\usepackage{biblatex}
\addbibresource{arxiv_bibliography.bib}

\newtheorem{Thm}{Theorem}[section]
\newtheorem{Lem}[Thm]{Lemma}
\newtheorem{Prop}[Thm]{Proposition}
\newtheorem{Cor}[Thm]{Corollary}

\theoremstyle{definition}
\newtheorem{Def}[Thm]{Definition}
\newtheorem{remark}[Thm]{Remark}
\newtheorem{Ex}[Thm]{Example}

\numberwithin{equation}{section}

\newcommand{\NN}{{\mathbb{N}}}
\newcommand{\ZZ}{{\mathbb{Z}}}

\newcommand{\RR}{{\mathbb{R}}}

\newcommand{\set}[1]{\left\{ #1 \right\}}
\DeclareMathOperator\supp{supp}
\DeclareMathOperator\rev{rev}
\DeclareMathOperator\comp{comp}
\DeclareMathOperator\revcomp{revcomp}
\DeclareMathOperator\desc{desc}
\DeclareMathOperator\seq{seq}
\newcommand{\cover}{\leqslantdot}
\newcommand{\Flow}{\textbf{Flow}}
\newcommand{\nset}[1]{ [ #1 ]}
\DeclareMathOperator{\composition}{\models}
\DeclareMathOperator{\leqa}{\leq_{\text{a}}}
\DeclareMathOperator{\leqb}{\leq_{\text{b}}}
\DeclareMathOperator{\leqc}{\leq_{\text{c}}}
\DeclareMathOperator{\geqa}{\geq_{\text{a}}}

\DeclareMathOperator{\geqc}{\geq_{\text{c}}}
\DeclareMathOperator{\covera}{\leqdot_{\text{a}}}
\DeclareMathOperator{\coverb}{\leqdot_{\text{b}}}

\DeclareMathOperator{\Prim}{\mathcal{P}}  
\newcommand{\defn}[1]{{\color{blue} \it {#1}}} 

\title[On the $f$-vectors of flow polytopes for the complete graph]{On the $f$-vectors of flow polytopes \\ for the complete graph}
\author[W. Dugan]{William T. Dugan}
\address[W. Dugan]{Department of Mathematics and Statistics, University of Massachusetts Amherst, Amherst, MA, U.S.A}
\email{wdugan@umass.edu}
\urladdr{https://sites.google.com/view/william-dugan/home}
\thanks{This project was partially supported by NSF grants DMS-1855536 and DMS-2154019.}

\date{\today}

\begin{document}

\begin{abstract}
    The Chan-Robbins-Yuen polytope ($CRY_n$) of order $n$ is a face of the Birkhoff polytope of doubly stochastic matrices that is also a flow polytope of the directed complete graph $K_{n+1}$ with netflow $(1,0,0, \ldots , 0, -1)$. The volume and lattice points of this polytope have been actively studied, however its face structure has received less attention. We give generating functions and explicit formulas for computing the $f$-vector by using Hille's (2003) result bijecting faces of a flow polytope to certain graphs, as well as Andresen-Kjeldsen's (1976) result that enumerates certain subgraphs of the directed complete graph. We extend our results to flow polytopes of the complete graph having arbitrary (non-negative) netflow vectors and recover the $f$-vector of the Tesler polytope of M\'esz\'aros--Morales--Rhoades (2017).
\end{abstract}

\maketitle

\section{Introduction}

The Chan-Robbins-Yuen polytope ($CRY_n$) of order $n$ is defined as the convex hull of $n$-by-$n$ permutation matrices $\pi$ for which $\pi_{i,j} = 0$ for $j \geq i+2$ \cite{Chan_Robbins_Yuen_2000}. This polytope has been the object of much interest in the research community, as it possesses many interesting traits. For example, Zeilberger proved in \cite{zeilberger_proof_1998} using a variation of the Morris constant term identity that $CRY_n$ has normalized volume equal to the product of the first $n-2$ Catalan numbers. A second algebraic proof was provided in \cite{Baldoni_Vernge_2001}, though a purely combinatorial proof of this fact remains elusive.  $CRY_n$ is also a face of the Birkhoff polytope of doubly stochastic matrices  having dimension $\binom{n}{2}$ and $2^{n-1}$ vertices \cite{Chan_Robbins_Yuen_2000}.

Furthermore, $CRY_n$ is an example of a more general family of polytopes, namely those which are flow polytopes of the complete (transitively directed) graph $K_{n+1}$ on vertex set $\set{v_1, \ldots , v_{n+1}}$, which include the family of Tesler polytopes \cite{Tesler_polytopes_2015}. We denote this family by $\Flow_n({\bf a})$  and the corresponding $f$-vectors and $f$-polynomials as $f^{(n)}({\bf a})$ and $f^{(n)}({\bf a}; x)$ respectively (see Definition~\ref{def::complete_graph_flow_polytopes}).

In particular, $CRY_n$ is realized as an instance of $\Flow_n({\bf a})$ by setting ${\bf a} = (1, 0, \ldots , 0)$. $\Flow_n({\bf a})$ has also been  studied by M\'esz\'aros--Morales--Rhoades \cite{Tesler_polytopes_2015} in the context of \textit{Tesler polytopes}, i.e. flow polytopes on the complete graph with positive netflows, in which they show that the case of all $a_i > 0$, such as ${\bf a} = (1,1,\ldots, 1)$, is combinatorially equivalent to a product of simplices $\Delta_n \times \Delta_{n-1} \times \ldots \times \Delta_1$. This was later generalized to other graphs by M\'esz\'aros--Simpson--Wellner \cite{Meszaros2017FlowPO}. Part of the difficulty in obtaining the $f$-vector of $\Flow_n({\bf a})$ for more general ${\bf a}$ arises from the fact that $\Flow_n(1,1, \ldots, 1)$ is simple, whereas general instances of $\Flow_n({\bf a})$ (including the case of $CRY_n$) are not.
 
In this manuscript, we give an explicit formula for the $f$-vector of $\Flow_n({\bf a})$ for any non-negative ${\bf a}$ as a sum over certain compositions. Namely, given a netflow vector ${\bf a}$, let $\revcomp({\bf a})$ be the composition obtained by reading the entries of ${\bf a}$ from right to left, inductively creating blocks whenever a new nonzero entry is encountered, and recording the tuple of sizes coming from the list of blocks (see Definition~\ref{def::revcomp} and  Example~\ref{ex::revcomp_ex}). Furthermore, let $\leqa$ be the partial order of refinement on compositions, and let $\ell(\alpha)$ be the number of parts of composition $\alpha$. 

\begin{restatable*}{Thm}{MainResult}
    \label{thm::main_result}
Given a netflow vector $({\bf a}, -|{\bf a}|) = (a_1, \ldots a_n, -\sum_{i = 1}^n a_i)$ with $a_i \in \mathbb{N}$, let $\alpha$ be the integer composition of $n$ given by $\alpha = \revcomp({\bf a})$. Define $Q(x_0,x_1, \ldots, x_n)$ to be the Laurent polynomial:
$$Q_{\alpha}(x_0,x_1, \ldots, x_n) :=  \frac{1}{x_0} + \frac{1}{x_0^n}\sum_{\beta \geqa \alpha}  (-1)^{\ell(\alpha) - \ell(\beta)} \pi_{\ell(\beta)}(x_0) {\bf x}^{\beta-{\bf 1}},$$
where $\pi_{n}(x_0):=x_0^{n}[n]_{x_0+1}! = \prod_{i = 1}^{n}((x_0+1)^i - 1)$.
Then the $f$-polynomial of $\Flow_n({\bf a})$ is given by:
     \begin{equation}
     \label{eq::f_vec_main_result}
        f^{(n)}({\bf a}; x)  = Q_{\alpha}(x, (x+1)^1 - (x+1), (x+1)^2 - (x+1), \ldots , (x+1)^n - (x+1) ).
     \end{equation}
     
\end{restatable*}

The reader may notice that Equation~\eqref{eq::f_vec_main_result} looks almost like an evaluation of a quasisymmetric function. We will discuss this viewpoint in Section~\ref{subsect::formulas_as_evaluations_of_sums_of_quasisymmetric_polynomials}.

Note that in the case of $a_i > 0$ for all $i$, we recover the results of \cite[Thm 1.7]{Tesler_polytopes_2015} that $f({\bf a}; x) = [n]_{x+1}!$, a consequence of $\Flow_n(1,1,\ldots , 1)$ being combinatorially equivalent to a product of simplices $\Delta_n \times \Delta_{n-1} \times \ldots \times \Delta_1$ as referenced above. In the case that ${\bf a} = (1, 0, \ldots , 0)$, we obtain a succinct formula for the previously-unknown $f$-vector of $CRY_n$ as a sum over complete homogeneous symmetric functions $h_m({\bf x}) := \sum_{1 \leq i_1 \leq \ldots \leq i_n}x_{i_1}\cdots x_{i_n}$.

\begin{restatable*}{Cor}{CorSpecializeCRY}
    \label{cor::CRY_f_vec_formula}
     Let $f^{(n)}(x)$ be the  $f$-vector of $CRY_n = \Flow_n(1,0, \ldots, 0)$ written as a Laurent polynomial. Then for all $n \geq 1$:
     \begin{equation}
        f^{(n)}(x) = \frac{1}{x} +  \frac{1}{x^n}\sum_{m = 0}^{n-2}(-1)^m(1+x)^m\pi_{n-m}(x)\cdot h_m(x, (x+1)^2 - 1, \ldots , (x+1)^{n-m - 1} -1)
    \end{equation}
\end{restatable*}

This is a direct generalization of a theorem due to Andresen--Kjeldsen \cite[Prop. 3.3]{andresen_kjeldsen_1976} (which is recovered by setting $x = 1$) enumerating certain subgraphs of $K_{n+1}$. In their paper, the authors of \cite{andresen_kjeldsen_1976} study two families of subgraphs originating from their prior work in automata theory: namely the set $\Omega_n$ of subgraphs $H$ of $K_{n+1}$ for which every vertex of $H$ lies along some path from the first vertex to the last vertex (see Equation~\eqref{eq::Omega_n_def}) and the set $\Omega_n'$ of those subgraphs which satisfy this property and also contain every vertex in the same connected component (see Equation~\eqref{eq::Omega_n_prime_def}). They give multiple formulas for the cardinalities $|\Omega_n|$ and $|\Omega_n'|$ (see for example equations~\eqref{eq::Andresen_Kjeldsen_psi_n} and \eqref{eq::Andresen_Kjeldsen_xi_n}) which our equations recover in the limit $x \rightarrow 1$. Additionally, they demonstrate how one may actually recover $|\Omega_n|$ from $|\Omega_n'|$ (and vice versa), as shown in \cite[eq. 1]{andresen_kjeldsen_1976}, which is a special case of our Corollary~\ref{cor::CRY_binomial_f_vec_f_primitive_vec_relationship}.

The connection between Corollary~\ref{cor::CRY_f_vec_formula} and Equation~\eqref{eq::Andresen_Kjeldsen_psi_n} is made explicit  via a  powerful theorem of Hille \cite{Hille_2003}, which relates faces of a flow polytope for some graph $G$ with certain subgraphs of $G$. Under the correspondence, dimension of a face of the flow polytope corresponds to the first Betti number of the corresponding subgraph (see Theorem~\ref{thm::Hille}).

In this way, we see that the $f$-polynomial of $CRY_n$ is exactly a generating function over $\Omega_n$, where variable $x$ keeps track of the first Betti number of $H \in \Omega_n$. This connection leads us to define a new notion of \defn{primitive $f$-polynomial} (respectively \defn{primitive $f$-vector}) of $\Flow_n({\bf a})$, denoted \defn{$\widetilde{f}^{(n)}({\bf a}; x)$} (respectively \defn{$\widetilde{f}^{(n)}({\bf a})$}), which is a generating function for the first Betti number of each graph over the subset of graphs which are primitive (see Definition~\ref{def::primitive_f_vector}).

Later in the text, we describe closed-form expressions for the primitive $f$-polynomial of $\Flow_n({\bf a})$ (Lemma~\ref{lem::primitive_f_vec_arbitrary_net_flow_sum_over_sets} and  Lemma~\ref{lem::primitive_f_vec_as_quasi_sym}) and describe a relationship between $f^{(n)}({\bf a} ; x)$ and $\widetilde{f}^{(n)}({\bf a} ; x)$ for arbitrary (non-negative) ${\bf a}$ (Lemma~\ref{lem::generalized_psi_xi_relationship}), as well as the special case of $CRY_n$ (Corollary~\ref{cor::CRY_binomial_f_vec_f_primitive_vec_relationship}). 

A second, special relationship exists between the $f$-vector and primitive $f$-vector in the case of $CRY_n$, and specializes to  \cite[Prop. 4.1 ]{andresen_kjeldsen_1976} of Andresen--Kjeldsen by setting $x = 1$:

\begin{restatable*}{Thm}{fVectorPrimitiveRelationship}
    \label{thm::f_vector_product_result}
    For all $n \geq 1$, the $f$-polynomial and primitive $f$-polynomial of $CRY_n$ are related as:
    \begin{equation}
    \label{eq::f_vector_primitive_f_vec_product_result}
        xf^{(n)}(x) = (1+x)^{n-1}\widetilde{f}^{(n)}(x).
    \end{equation}
\end{restatable*}

Finally, we remark that Jel\'inek \cite{Jelinek_2011} observed that $\Omega_n'$ is in fact in bijection with the set of  \defn{primitive Fishburn matrices} (upper triangular, $0$-$1$ matrices such that no row nor column is the zero vector) , and consequently is related to the enumeration of interval orders \cite{Bousquet_Melou_2010}, by interpreting $H \in \Omega_n'$ as the upper-triangular matrix determined by its edges. 
As discussed in \cite{hwang_jin_schlosser_2020}, the bijections continue, as the more general notion of Fishburn matrices are in bijection with  Stoimenow matchings, ascent sequences, and more \cite{Bousquet_Melou_2010, Stoimenow_1998}. See \cite{hwang_jin_schlosser_2020, Jelinek_2011} for a more comprehensive list of related combinatorial objects.

Either from Corollary~\ref{cor::CRY_f_vec_formula} or from a multivariate generating function of Fishburn matrices due to Jel\'inek \cite[Thm. 2.1]{Jelinek_2011} one obtains the following nice generating function for $d$-dimensional faces of $CRY_n$ for varying $d$ and $n$.

\begin{restatable*}{Cor}{CorGeneratingFunction}
\label{cor::generating_function_for_CRY_faces}
The number of $d$-dimensional faces of $CRY_n$ is given by the coefficient $f^{(n)}_d = [t^nx^{d}]F(t,x)$, where $F(t,x)$ is defined by:
    \begin{equation}
        F(t,x) := \frac{1}{x-xt} + \sum_{n = 0}^{\infty} t^nx^{-n} \prod_{i = 1}^{n}\frac{(1 + x)^i - 1}{1 + ((1+x)^i - 1 - x)tx^{-1}}.
    \end{equation}
\end{restatable*}

The rest of this paper is organized as follows: In Section~\ref{sect::Background} we provide a more comprehensive overview of the background material needed in order to understand the paper's main results. In Section~\ref{sect::main_results}, we derive our main result Theorem~\ref{thm::main_result} as well as results for general primitive $f$-polynomials (Lemma~\ref{lem::primitive_f_vec_arbitrary_net_flow_sum_over_sets} and Lemma~\ref{lem::primitive_f_vec_as_quasi_sym}) needed in the proof.  In Section~\ref{sect::formulas_for_CRY_n} we specialize our results to $CRY_n$. Finally, we conclude in Section~\ref{sect::direct_enumerative_results_and_recurrences}, by describing some of the direct enumerative results for faces of $\Flow_n({\bf a})$ that follow from Theorem~\ref{thm::Hille}, as well as recurrences that can be obtained from such. In particular, we give formulas for the number of vertices of $\Flow_n({\bf a})$ (Proposition~\ref{prop::number_of_vertices}) and the number of faces of low codimension (Theorem~\ref{cor::CRY_faces_low_codim}) that are more computationally efficient than computing the entire $f$-vector.

\section{Background}

In this section, we provide further background in order to make the paper as self-contained as possible. Readers already familiar with this material may wish to skip to Section~\ref{sect::main_results}.

\label{sect::Background}

\subsection{Posets of compositions}

\label{subsect::posets_of_compositions}

There exist many different partial orders involving integer compositions, and in this text we will require a few. To fix notation, let $\mathcal{C}_n$ denote the set of integer compositions of $n$, and $\mathcal{C} := \bigcup_{n \geq 0} \mathcal{C}_n$ the set of compositions of all $n \in \NN$. We will write $\alpha \composition n$ if $\alpha \in \mathcal{C}_n$. 

Compositions also come with two different notions of size. If $\alpha = (\alpha_1, \alpha_2, \ldots, \alpha_r) \composition n$, we will write $\ell(\alpha) := r$ for the number of parts of $\alpha$ and $|\alpha| := n$ for the sum of its parts.  

The first partial order of interest is one on $C_n$, namely that of \textit{reverse refinement} used frequently in the theory of quasisymmetric functions. If we denote this poset by $(\mathcal{C}_n, \leqa)$ and we have two integer compositions $\alpha = (\alpha_1, \alpha_2, \ldots, \alpha_r) \composition n$ and $\beta = (\beta_1, \ldots \beta_s) \composition n$ then $\alpha \leqa \beta$ if and only if each of the $\beta_i$'s are compositions of the $\alpha_j$'s. Said another way, the cover relations in  $(\mathcal{C}_n, \leqa)$ have the form \cite{Petersen_2005}:
$$(\alpha_1, \alpha_2, \ldots, \alpha_i + \alpha_{i+1}, \ldots  \alpha_r) \covera (\alpha_1, \alpha_2, \ldots, \alpha_i,  \alpha_{i+1}, \ldots  \alpha_r)$$
We observe that these posets are easily seen to be isomorphic to Boolean lattices.

The second partial order we need is one on $\mathcal{C}$ defined by Bj\"orner and Stanley \cite{Bjorner_Stanley_2005}. If we denote this poset by $(\mathcal{C}, \leqb)$, recall that $\leqb$ is defined via two types of cover relations (which we will denote $\coverb_1$ and $\coverb_2$). For compositions $\alpha$ and $\beta$, these are described by (\cite{Bjorner_Stanley_2005}):

\medskip

\begin{itemize}
    \item (type I cover) $\alpha$ is covered by  $\beta$ if $\beta$ can be obtained from $\alpha$ by adding $1$ to a part, and
    \item (type II cover) $\alpha$ is covered by  $\beta$ if $\beta$ can be obtained from $\alpha$ by adding $1$ to a part and then splitting this part into two parts. 
\end{itemize}
     
This poset (minus the red lassos) is depicted in Figure~\ref{fig::composition_poset} (left). Finally, we introduce a third poset relevant to the computations in Section~\ref{sect::main_results}. 

\begin{Def}
    Define $(\mathcal{C}, \leqc)$ to be the coarsening of $(\mathcal{C}, \leqb)$ by  taking only the transitive closure of the type I cover above. See Figure \ref{fig::composition_poset} (right) for an example.
\end{Def}

We will be most interested in downsets of $(\mathcal{C}, \leqc)$, which are easily seen to be isomorphic to products of chains. 

\begin{figure}
    \centering
    \includegraphics[width = \textwidth]{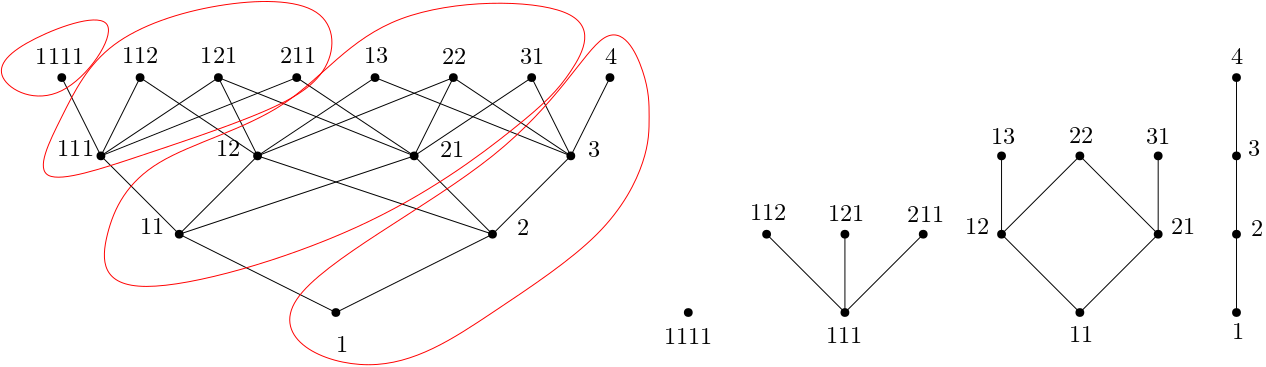}
    \caption{The composition poset $(\mathcal{C}, \leqb)$ of \cite{Bjorner_Stanley_2005} with lassos indicating the downsets determined by $\cover_b1$ (left) and the coarsening $(\mathcal{C}, \leqc)$ (right).}
    \label{fig::composition_poset}
\end{figure}

It is well-known that elements of $\mathcal{C}_n$ are in bijection with $0$-$1$ sequences of length $n-1$. For formulas to be stated more simply in the following sections, we will use a slight variant of this bijection described as follows.

\begin{Def}
\label{def::revcomp}
    For a subset $S \subseteq [n]$, let the \defn{reverse} of $S$, denoted $\rev(S)$,  be defined as $\rev(S) = \{n + 1 - i \, | \, i \in S \}$.
For a natural number vector ${\bf a} := (a_1, \ldots, a_n)$ such that $a_1 \neq 0$, we define the \defn{reverse composition}, $\revcomp({\bf a})$,  as the composition corresponding to the set $\rev(\supp({a_2, \ldots, a_n}))$.    Computationally, $\revcomp({\bf a})$ may be obtained quickly by reading the entries of ${\bf a}$ from right to left, inductively creating blocks whenever a new nonzero entry is encountered, and recording the tuple of sizes coming from the list of blocks. \footnote{The reason for choosing this map as opposed to the usual map between $0$-$1$ vectors and integer compositions arises from the application of this map to the enumeration of faces of flow polytopes in Section~\ref{sect::main_results}. In particular, one could instead choose to use the usual map between $0$-$1$ vectors and compositions at the expense of labelling the vertices of $K_{n+1}$ from right to left. }
\end{Def}

\begin{Ex}
\label{ex::revcomp_ex}
  If ${\bf a} = (1,1,0,0,1,0,1,0)$, then when we read ${\bf a}$ right to left, we first encounter the block $(0,1)$, followed by $(0,1)$, followed by $(0,0,1)$ and finally $(1)$. The reverse composition of ${\bf a}$ is then obtained by writing down the sizes of these blocks, hence $\revcomp({\bf a}) = (2,2,3,1)$.  For a non $0$-$1$ vector, we may first replace every nonzero entry with a $1$ and then perform the same procedure described here.
\end{Ex}

\begin{remark}
    The condition that $a_1 \neq 0 $ in Definition~\ref{def::revcomp} is imposed because we can always make this assumption on the netflow vector for a flow polytope (see Section~\ref{subsect::flow_polytopes} for definitions and notations related to flow polytopes). Indeed, if $a_1 = 0$, we may simply consider the flow polytope $\mathcal{F}_{G - \{v_1\} }(a_2, \ldots a_n)$ and all of the relevant combinatorics will be preserved. Additionally, note that such an assumption ensures that $\revcomp$ induces a bijection from the set of length-$n$ bit strings $(a_1, \ldots , a_n)$ such that $a_1 \neq 0$ and integer compositions of length $n$ (instead of length $n+1$). 
\end{remark}

\begin{remark}
\label{rem::composition_posets_as_bitstrings}
    Denote by $\mathcal{B}_n$ the set of all $0$-$1$ vectors of length $n$ such that the first entry is not $0$, and denote by $\mathcal{B} := \bigcup_{n \geq 0} \mathcal{B}_n $ the collection of all of these vectors together. We observe that each of the partial orders on compositions mentioned above translates to a partial order on $\mathcal{B}_n$ (respectively $\mathcal{B}$). We describe this translation here (reusing the symbols $\leqa, \leqb, $ and $\leqc$ by abuse of notation).
    \begin{itemize}
        \item The partial order $\leqa$ interpreted in $\mathcal{B}_n$ becomes ${\bf a} \leqa {\bf b}$ if and only if ${\bf b}$ may be obtained from ${\bf a}$ by flipping $0$'s to $1$'s.
        \item The partial order $\leqb$ interpreted in $\mathcal{B}$ becomes 
    ${\bf a} \leqb {\bf b}$ if and only if ${\bf a}$ may be obtained from ${\bf b}$ by deleting entries.
        \item  Finally, the partial order $\leqc$ in $\mathcal{B}$ becomes ${\bf a} \leqc {\bf b}$ if and only if ${\bf a}$ can be obtained from ${\bf b}$ be deleting $0$-entries. 
    \end{itemize}

    The two notions of size on compositions also translate to notions of size on $0$-$1$ vectors. However, one must take care as the two notions of size seem to flip under the correspondence. For ${\bf a}$ a $0$-$1$ vector, we denote by $\ell({\bf a})$ the length of ${\bf a}$ and by $|{\bf a}|$ the sum of the entries of ${\bf a}$ (i.e. the number of $1$'s). Then under the bijection described above, $\ell({\bf a}) = |\revcomp({\bf a})|$ and  $|{\bf a}| = \ell(\revcomp({\bf a}))$. 
    
     
\end{remark}

 \begin{remark}
 \label{rem::signature}
     Recall that the \defn{signature} of a binary vector ${\bf v} = (\underbrace{1, 0, \ldots 0}_{k_1}, \underbrace{1, 0, \ldots , 0}_{k_2}, \ldots \underbrace{1, 0, \ldots 0}_{k_r})$ is the tuple $\defn{sgn({\bf v})} := (k_1, k_2, \ldots, k_r)$ (see also \cite{PS_vertices}). We note that $\revcomp({\bf a})$ may also be thought of as the signature of the vector ${\bf b}$, where ${\bf b}$ is the vector obtained by first reversing ${\bf a}$ and then flipping every bit. 
 \end{remark}

\subsection{Flow Polytopes}

\label{subsect::flow_polytopes}

The main class of objects in this paper is that of a flow polytope, a polytope determined by the data of a directed, acyclic graph having an integer weight on each of its vertices. Points of the polytope are tuples of real numbers that correspond to valid flows  over the edges of the graph that conserve the netflow determined by each vertex's weight. We make this precise as follows.

\begin{Def}[Flow] 
    Let $G = (V, E)$ be a directed, acyclic graph on the vertex set $V = [n+1]$, and let ${\bf a}' = (a_1, \ldots , a_n, a_{n+1}) \in \ZZ^{n+1}$ be a vector of weights on the vertex set $V$ (with vertex $i$ having weight $a_i$) such that $a_{n+1} = -\sum_{i = 1}^{n}a_i$. Then ${\bf a}'$ is called a \defn{netflow vector} and a function $f: E \rightarrow \RR$ is an \defn{${\bf a}'${-flow}} (or simply a \defn{flow}, if the netflow vector ${\bf a}'$ is understood from context) if for each $i \in [n]$:
    \begin{equation}
        \label{eq::conservation_of_flow}
        \sum_{e = (i,j) \in E} f(e) - \sum_{e = (k,i) \in E} f(e) = a_i
    \end{equation}
    In other words, the difference between the incoming flow at vertex $i$ and outgoing flow at vertex $i$ is exactly $a_i$. The reader may be familiar with Equation~\eqref{eq::conservation_of_flow} as \textit{Kirchhoff's Law} when the netflow at a vertex is $0$. Indeed, one may think of Equation~\eqref{eq::conservation_of_flow} as conservation of flow on a network after interpreting a positive $a_i$ as being the amount of flow entering the network at node $v_i$ and a negative $a_i$ as the being the amount of flow leaving at node $v_i$.
\end{Def}

\begin{Def}[Flow polytope]
    Let $G$ be a directed, acyclic graph and let ${\bf a}'$ be a netflow vector. Then the \defn{flow polytope} determined by $G$ and ${\bf a}'$ is the convex set:
    \begin{equation}
        \mathcal{F}_G({\bf a}') := \{ {\bf x}\in \RR^{|E|} \, | \, {\bf x} = (f(e_1), \ldots , f(e_{|E|})) )  \text{ for  an ${\bf a}'$-flow } f\}
    \end{equation}
\end{Def}

Note in particular that ${\bf a}' \in \ZZ^{n+1}$ implies that $\mathcal{F}_G({\bf a}')$ is integral, hence all polytopes in this paper will be integral. Moreover, isomorphism in the category of flow polytopes is given by integral equivalence; that is, affine transformations that preserve the integer lattice. We will denote integral equivalence by the symbol $\equiv$.

One approach to studying flow polytopes is to study families of polytopes arising from a known family of graphs. In this paper, we focus on the family of flow polytopes $\{ \mathcal{F}_{K_{n+1}}({\bf a}') \}_{n \geq 1}$ arising from the complete graphs $K_{n+1}$. We fix notation for this family here.

\begin{Def}
\label{def::complete_graph_flow_polytopes}
    For $n \in \NN$ and ${\bf a} \in \NN^n$, we denote the flow polytope $\mathcal{F}_{K_{n+1}}({\bf a}, -\sum_{i=1}^{n} a_i)$ as $\defn{\Flow_n({\bf a})}$.  We will denote the $f$-vector of $\Flow_n({\bf a})$ by $\defn{f^{(n)}({\bf a})}$ (or \defn{$f^{(n)}({\bf a}; x)$} if written as a Laurent polynomial, where the coefficient of $x^i$ gives the number of $i$-dimensional faces for $i \geq -1$).
\end{Def}

Some of our results (c.f. Theorem~\ref{thm::f_vector_product_result}) specifically require us to include the $\frac{1}{x}$ term in $f^{(n)}({\bf a}; x)$ corresponding to the empty face. As this is the only non-polynomial term of $f^{(n)}({\bf a}; x)$, however, we will abuse terminology and often refer to $f^{(n)}({\bf a}; x)$ as the \defn{$f$-polynomial} of $\Flow_n({\bf a})$.  The first few values of $f^{(n)}(1,0,\ldots, 0)$ are included in Table~\ref{tab::f_vectors_of_CRY_n}.

\begin{table}[]
    \centering
    \begin{tabular}{c p{15cm}}
    \hline
    $n$ & $f$-vector of $CRY_n$\\
    \hline
      $-1$   &  $(1)$\\
       $0$  &  $(1)$ \\
       $1$   &  $(1, 1)$\\
       $2$  &  $(1, 2, 1)$ \\
       $3$   &  $(1, 4, 6, 4, 1)$\\
       $4$  &  $(1, 8, 26, 45, 45, 26, 8, 1)$ \\
       $5$   &  $(1, 16, 98, 327, 681, 944, 897, 588, 262, 76, 13, 1)$\\ 
       $6$  &  $(1, 32, 342, 1943, 6982, 17326, 31236, 42198, 43521, 34601, 21249, 10020, 3571, 933, 169, 19, 1)$ \\
       $7$   &  $(1, 64, 1138, 10275, 58093, 228396, 664200, 1486921, 2633161, 3759650, 4386239, 4218971, 3363558, $ \newline $2227042, 1222927, 554147, 205256, 61206, 14351, 2550, 323, 26, 1)$\\
       $8$  &  $(1, 128, 3670, 50403, 424214, 2468235, 10653629, 35711651, 95967645, 211567734,  389268482, 605593465, $ \newline $ 804533944, 919531124, 909049826, 780149435, 582376682, 378321185, 213630918, 104570683, 44165758, $ \newline $ 15985336, 4910781, 1263620, 267378, 45321, 5918, 559, 34, 1)$\\
       \hline
    \end{tabular}
    \caption{The first few $f$-vectors of $CRY_n.$}
    \label{tab::f_vectors_of_CRY_n}
\end{table}

\begin{remark}
    The family of flow polytopes $\Flow_n({\bf a})$ have gone by a few different names in the literature, including \textit{Tesler polytopes} \cite{Tesler_polytopes_2015} and \textit{Baldoni-Vergne polytopes} \cite{meszaros_morales_ehrhart_2019}. More recently, \textit{Tesler polytopes} refer to the case of $\Flow_n({\bf a})$ in which all $a_i > 0$ \cite{ meszaros_morales_ehrhart_2019, Meszaros2017FlowPO}. To avoid any ambiguity we will simply refer to them throughout as $\Flow_n({\bf a})$ in keeping with the notation of \cite{Tesler_polytopes_2015}.
\end{remark}

\subsection{Faces of Flow Polytopes}

One of the main goals of this manuscript is to find a formula for $f^{(n)}({\bf a})$ depending only on the input vector ${\bf a}$. To achieve this goal, we will make frequent use of the following theorem relating faces of a flow polytope $\mathcal{F}_{G}({\bf a})$ with certain subgraphs of $G$. Here a subgraph $H \subseteq G$ is \textbf{${\bf a}$-valid} if $H$ is the support of an ${\bf a}$-flow on $G$, and the \textit{first Betti number of $H$} is $\beta_1(H) := |E(H)| - |V(H)| + c(H)$, where $c(H)$ is the number of connected components of $H$. See also \cite{PS_vertices}. 

\begin{Thm}[{\cite{Hille_2003}}]
\label{thm::Hille}
    Let $\mathcal{F}_G({\bf a})$ be a flow polytope such that $a_i \geq 0$ for all $i$. Then for $d \geq 0$, the $d$-dimensional faces of $\mathcal{F}_G({\bf a})$ are in one-to-one correspondence with subgraphs $H \subseteq G$ such that $H$ is ${\bf a}$-valid and $\beta_1(H) = d$. The empty face of $\mathcal{F}_G({\bf a})$ corresponds to the empty subgraph of $G$.
\end{Thm}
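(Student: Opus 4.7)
The plan is to combine standard linear-programming theory for faces of a polyhedron in standard form with the cycle-space interpretation of flows. First I would present $\mathcal{F}_G({\bf a})$ as $\{f \in \RR^{|E|} : Mf = {\bf a},\ f \geq 0\}$, where $M$ is the signed vertex--edge incidence matrix of $G$. Since the only inequalities cutting out the polytope are the non-negativities $f(e) \geq 0$, every face is obtained by insisting $f(e) = 0$ for edges $e$ outside some $S \subseteq E$, and such a face is non-empty precisely when the subgraph $H := (V,S)$ is ${\bf a}$-valid. To upgrade this to an honest bijection between faces and ${\bf a}$-valid $H$, one shows that the relative interior of the face attached to $H$ consists of flows whose support is exactly $E(H)$; this uses the existence of a strictly positive ${\bf a}$-flow on $H$ (i.e., ${\bf a}$-validity itself) together with the hypothesis $a_i \geq 0$, so that $E(H)$ is the maximal support attained in that face.

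Next I would compute the dimension of the face $F_H$ attached to an ${\bf a}$-valid subgraph $H$. Any two ${\bf a}$-flows supported on $H$ differ by an element of $\ker(M|_{E(H)})$, which is the cycle space $Z_1(H)$ of $H$; thus $F_H$ lies in an affine translate of $Z_1(H)$. Because $F_H$ contains a point that is strictly positive on every edge of $H$, a small perturbation within $Z_1(H)$ stays non-negative, so $F_H$ is relatively open in this affine subspace. Hence $\dim F_H = \dim Z_1(H)$, and the identity $\dim Z_1(H) = |E(H)| - |V(H)| + c(H) = \beta_1(H)$ is the standard linear-algebra fact that the signed incidence matrix of a graph with $c$ connected components has rank $|V(H)| - c$.

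The main obstacle will be the bookkeeping in the first step: making sure the correspondence is genuinely a bijection, not just a surjection from edge subsets onto faces. Concretely, one must argue that every face of $\mathcal{F}_G({\bf a})$ has a canonical ``essential support'' (the common support of its relative-interior points) and that this essential support is always the edge set of an ${\bf a}$-valid subgraph; conversely, that starting from any ${\bf a}$-valid $H$ and building $F_H$ recovers $H$ as that essential support. Both directions rely on the sign hypothesis $a_i \geq 0$, which ensures the face structure is governed entirely by which coordinates vanish. The empty-subgraph/empty-face match then follows, since the empty graph supports no non-trivial ${\bf a}$-flow.
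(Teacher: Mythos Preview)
The paper does not prove this statement; Theorem~\ref{thm::Hille} is quoted from \cite{Hille_2003} and used as a black box throughout. So there is no in-paper proof to compare your proposal against.

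That said, your sketch is the standard and correct argument: realize $\mathcal{F}_G({\bf a})$ in the form $\{f:Mf={\bf a},\,f\ge 0\}$, identify non-empty faces with realizable supports (which are exactly the ${\bf a}$-valid edge sets), and read the dimension off as the nullity of $M$ restricted to $E(H)$, which is $|E(H)|-\operatorname{rank}(M|_{E(H)})=|E(H)|-|V(H)|+c(H)=\beta_1(H)$. One small inaccuracy: the face--support bijection for a polyhedron in standard form $\{x:Ax=b,\,x\ge 0\}$ does not depend on any sign condition on $b$; it holds because the only inequalities present are the non-negativities. The hypothesis $a_i\ge 0$ is not what makes ``the face structure governed entirely by which coordinates vanish''; rather, in this paper it guarantees non-emptiness of the polytope and controls which subgraphs can occur as supports (every source of $H$ must carry positive netflow), which is how it enters the combinatorics downstream. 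Your bookkeeping worry is the right one, and the resolution you outline---assigning to each face the common support of its relative-interior points, and checking that this recovers $H$ from $F_H$---is exactly how one makes the correspondence a bijection rather than a many-to-one map from edge subsets.
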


An illustration of Theorem~\ref{thm::Hille} may be seen in Figure~\ref{fig::faces_of_CRY_3}.

\subsection{Primitive graphs and $f$-vectors}

Andresen and Kjeldsen \cite{andresen_kjeldsen_1976} were the first to study the families $\Omega_n$  and $\Omega_n'$ of subgraphs of the complete graph defined as follows:
\begin{equation}
\label{eq::Omega_n_def}
    \Omega_n := \set{H \subseteq K_{n+1} \, | \, \text{every $v \in V(H)$ lies along a direct path from $v_1$ to $v_{n+1}$}}
\end{equation}
\begin{equation}
\label{eq::Omega_n_prime_def}
    \Omega_n' := \{H \in \Omega_n \, | \, V(H) = \set{v_1, \ldots, v_{n+1}} \text{ and $H$ is connected} \}.
\end{equation}

In the language of flow polytopes, $\Omega_n$ is exactly the set of subgraphs of $K_{n+1}$ that correspond to faces of $CRY_n$ under the bijection of Theorem~\ref{thm::Hille}, while $\Omega_n'$ is the subset of $\Omega_n$ of graphs with all vertices contained in a single connected component. They found numerous formulas for the cardinalities for these sets, for example\footnote{In \cite{andresen_kjeldsen_1976}, the authors refer to these cardinalities as $\psi(n) := |\Omega_n|$ and $\xi(n) := |\Omega_n'|$.}: 
\begin{equation}
\label{eq::Andresen_Kjeldsen_psi_n}
         |\Omega_n| = \sum_{m = 0}^{n-2}(-2)^m\pi_{n-m}\cdot h_m(2^1 - 1, 2^2 - 1, \ldots , 2^{n-m - 1} -1)
\end{equation}
\begin{equation}
\label{eq::Andresen_Kjeldsen_xi_n}
         |\Omega_n'| = \sum_{m = 0}^{n-1}(-1)^m\pi_{n-m}\cdot h_m(2^1 - 1, 2^2 - 1, \ldots , 2^{n-m} -1),
\end{equation}
where $\pi_n := \prod_{i = 1}^n (2^i - 1)$. These cardinalities give rise to sequences \cite[\href{https://oeis.org/A005016}{A005016}]{oeis} and \cite[\href{https://oeis.org/A005321}{A005321}]{oeis}, respectively.

In particular, by Theorem~\ref{thm::Hille} $|\Omega_n|$ enumerates the total number of faces of $CRY_n$ and $f^{(n)}(1,0,\ldots, 0 ; x)$ is a generating function over the set $\Omega_n$ keeping track of the first Betti number of each graph.

Andresen and Kjeldsen referred to elements of $\Omega_n'$ as \textit{primitive} graphs. We generalize this notion and say that a subgraph $H$ of $K_{n+1}$ is \defn{primitive} if is connected and has vertex set $[n+1]$. In the same way that $f^{(n)}({\bf a}; x)$ is a generating function over the set of faces of $\Flow_n({\bf a})$ keeping track of the first Betti number of graphs, we are interested in the analogous generating function over the set of faces of $\Flow_n({\bf a})$ that correspond to primitive graphs.

\begin{Def}
\label{def::primitive_f_vector}
    The \defn{primitive $f$-polynomial} of $\Flow_n({\bf a})$, denoted $\widetilde{f}^{(n)}({\bf a}; x)$ (or as $\widetilde{f}^{(n)}({\bf a})$ if written as a vector) is a generating function over the set of ${\bf a}$-valid subgraphs of $K_{n+1}$ that are primitive (use the entire vertex set) keeping track of the first Betti number.
\end{Def}

Note that it follows immediately from the definition that $\widetilde{f}^{(n)}(1,0,\ldots, 0;x)|_{x = 1} = |\Omega_n'|$ in the same way that $f^{(n)}(1,0,\ldots, 0;x)|_{x = 1} = |\Omega_n|$ from Theorem~\ref{thm::Hille}. The first few values of $\widetilde{f}^{(n)}(1,0,\ldots , 0)$ are included in Table~\ref{tab::primitive_f_vectors_of_CRY_n}. An example computation of $f^{3}(1,0,0;x)$ and $\widetilde{f}^{3}(1,0,0;x)$ may be seen in Figure~\ref{fig::faces_of_CRY_3}.

\begin{table}[]
    \centering
    \begin{tabular}{c p{15cm}}
    \hline \vspace*{-3mm}\\
    $n$ & $\widetilde{f}$ of $CRY_n$\\
    \hline
      $0$   &  $(1)$\\
       $1$  &  $(0, 1)$ \\
       $2$   &  $(0, 1, 1)$\\
       $3$  &  $(0, 1, 4,4,1)$ \\
       $4$   &  $(0, 1,11,33,42,26,8,1)$\\
       $5$  &  $(0, 1, 26, 171, 507,840, 865, 584, 262, 76, 13,1)$ \\
       $6$   &  $(0, 1, 57, 718, 4017, 12866, 26831, 39268, 42211, 34221, 21184, 10015, 3571, 933, 169, 19, 1)$\\
       $7$  &  $(0, 1, 120, 2682, 25531, 138080, 490079, 1242533, 2375965, 3553184, 4258940, 4158866, 3342132, 2221444, $ \newline $ 1221913, 554033, 205250, 61206, 14351, 2550, 323, 26, 1)$ \\
       $8$ & $(0, 1, 247, 9327, 141904, 1201179, 6629070, 26168817, 78440289, 185974145, 359010583, 576271053, $ \newline $781064029,  903961423, 900492886, 776270805, 580939911, 377892743, 213530461, 104552833, 44163497,  $ \newline $ 15985154, 4910774, 1263620, 267378, 45321, 5918, 559, 34, 1)$\\
       \hline
    \end{tabular}
    \caption{The first few primitive $f$-vectors of $CRY_n.$}
    \label{tab::primitive_f_vectors_of_CRY_n}
\end{table}

\begin{figure}
    \centering
    \includegraphics[width = \textwidth]{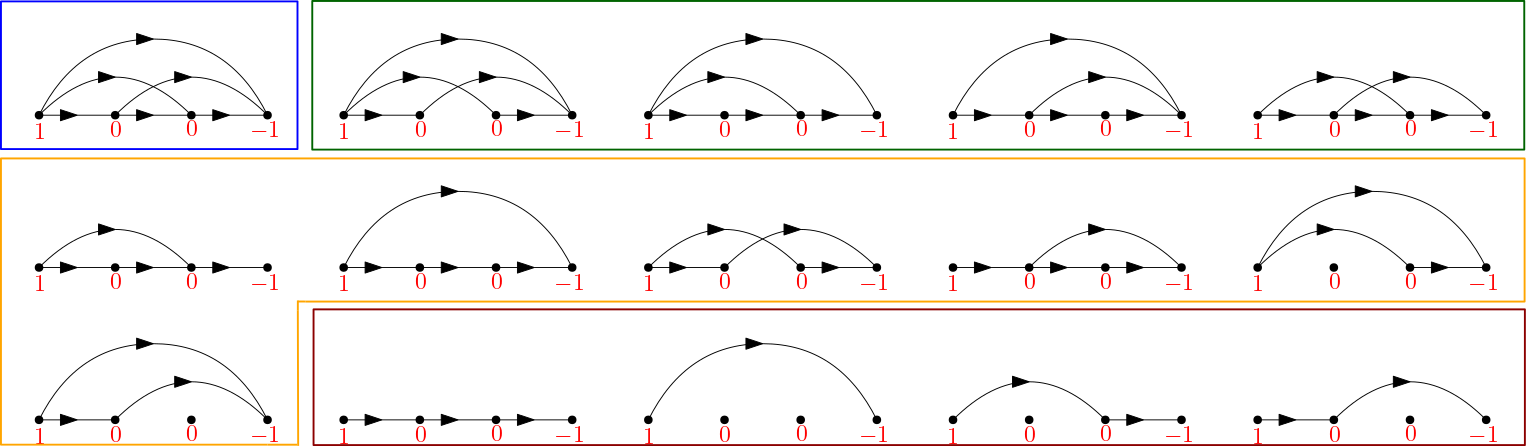}
    \caption{The elements of $\Omega_3$ grouped by first Betti number, corresponding to the $f$-vector $(1,4,6,4,1)$ of $CRY_3$ (but excluding the empty face which would correspond to the empty graph). The primitive $f$-vector $(0,1,4,4,1)$ corresponds to the number of graphs in each grouping which use all vertices.}
    \label{fig::faces_of_CRY_3}
\end{figure}

\subsection{Fishburn Matrices} A \defn{Fishburn matrix} is an upper-triangular matrix with natural number entries such that no column nor row is the zero vector. A \defn{primitive} Fishburn matrix is a Fishburn matrix using only entries in $\{0,1\}$. The redundant use of \textit{primitive} for these various objects is due to their connection to one another. Namely, Jel\'inek \cite{Jelinek_2011} observed that $\Omega_n'$ is in bijection with the set of primitive Fishburn matrices of size $n$ via the map that sends a graph to the upper-triangular $0$-$1$ matrix determined by the support of its edges (c.f. Figure~\ref{fig::Fishburn_matrix_example}). 

\begin{figure}
    \centering
    \includegraphics[width = 10cm]{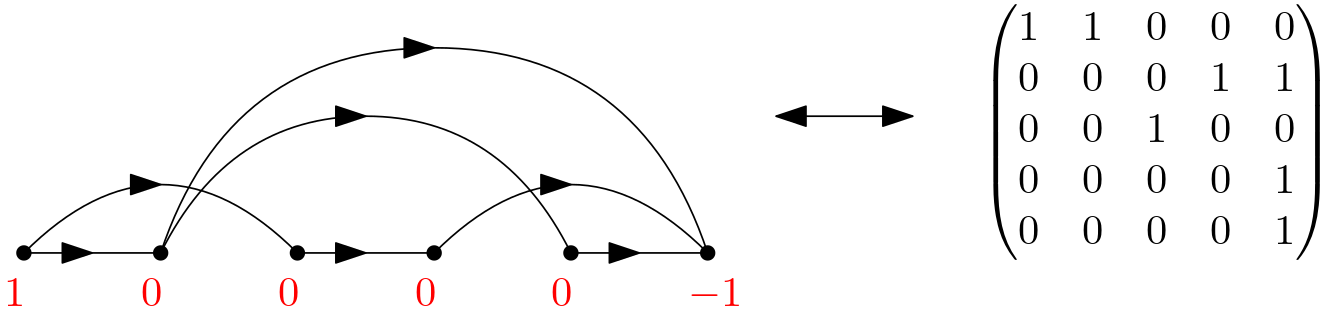}
    \caption{The correspondence between elements of $\Omega_n'$ and primitive Fishburn matrices.}
    \label{fig::Fishburn_matrix_example}
\end{figure}

Jel\'inek \cite{Jelinek_2011} further gave a multivariate generating function for the number of primitive Fishburn matrices keeping track of five different statistics:

\begin{equation}
\label{eq::Jelinek_generating_function}
    G(t,v,w,x,y) = \sum_{n \geq 0} t^{n+1} \frac{(x+1)(y+1)^n - 1}{1 + t((v+1)(w+1)^n - 1)} \prod_{i = 0}^{n-1} \frac{(v+1)(w+1)^i - 1}{1 + t((v+1)(w+1)^i - 1)}
\end{equation}
where $t$ keeps track of the number of rows in each matrix, $x$ keeps track of the corner cell, $y$ keeps track of the sum of the first row excluding the corner cell, $v$ keeps track of the sum of the last column excluding the corner cell, and $w$ keeps track of the sum of all entries excluding those in the first row and last column \cite{Jelinek_2011}.  

More general instances of Fishburn matrices do not appear to have any immediate relationship with more general faces of $CRY_n$.

\begin{remark}
    \textbf{Notations and conventions:}  Our vector ${\bf a}$ used in this paper is often denoted ${\widetilde{\bf a}}$ in the flow polytope literature, as it does not account for the last vertex whose netflow is predetermined by the first $n$ entries. Moreover, we note that in the case of $a_i \geq 0$ for all $i$ as we are assuming in this manuscript,  a consequence of Theorem~\ref{thm::Hille} is that the combinatorial equivalence class of $\mathcal{F}_G\left({\bf a}, -\sum_{i = 1}^n a_i\right)$ is completely determined by the support of ${\bf a}$. Hence we may assume for the rest of the paper that ${\bf a} \in \set{0,1}^n$. An excellent source for any other unexplained terms and notation is \cite{Benedetti_transactions_2018}.
\end{remark}

\section{$f$-polynomials of $\Flow_n({\bf a})$}
\label{sect::main_results}

We now describe various results that build towards Theorem~\ref{thm::main_result}. 

\subsection{Formulas as sums over subsets}

In \cite{andresen_kjeldsen_1976}, the authors define certain sequences of numbers which prove useful for the exact enumeration of the sets $\Omega_n$ and $\Omega_n'$ (here we require a change of convention to non-increasing sequences instead of non-decreasing sequences). 

\begin{Def}[\cite{andresen_kjeldsen_1976}]
    Let $S_{n,m}$ be the set of all sequences $(i_1, \ldots i_n) \in \NN^n$ having length $n$ such that:
    
    \begin{enumerate} 
    \item[(i)] $i_1 = n-m$, \hspace{1.5cm}
         \item[(ii)] $i_n = 1$, \hspace{1.5cm}
         \item[(iii)] $i_j \geq i_{j+1} \geq i_j - 1$ for all $j < n$.
    \end{enumerate}
\end{Def}

For our purposes, it will be simpler to think of the sequences in $S_{n,m}$ as subsets of $\nset{n} := \set{1, \ldots, n}$ through the following correspondence.

\begin{Prop}
\label{lem::sequences_to_subsets}
    The map $\desc: S_{n,m} \rightarrow \binom{\nset{n-1}}{m}$ mapping a sequence in $S_{n,m}$ to the set of indices of its descents is a bijection. 
\end{Prop}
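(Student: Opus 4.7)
The plan is to establish the bijection by explicitly constructing the inverse map, since the proof reduces to the elementary observation that sequences in $S_{n,m}$ are determined by the pattern of steps they take. First I would parameterize each sequence $(i_1,\ldots,i_n) \in S_{n,m}$ by the tuple of consecutive differences $\delta_j := i_j - i_{j+1}$ for $j = 1,\ldots, n-1$. Condition (iii) forces each $\delta_j \in \{0,1\}$, so the sequence is completely reconstructible from $(\delta_1,\ldots,\delta_{n-1}) \in \{0,1\}^{n-1}$ together with the starting value $i_1 = n-m$. Thus giving a sequence in $S_{n,m}$ amounts to giving such a $0/1$-vector together with a constraint coming from the boundary values.

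Second, I would pin down which $0/1$-vectors actually arise. Telescoping gives
$$\sum_{j=1}^{n-1} \delta_j \;=\; i_1 - i_n \;=\; (n-m)-1 \;=\; n-m-1,$$
so condition (ii) is equivalent to saying exactly $n-m-1$ of the $\delta_j$'s equal $1$, or equivalently, exactly $m$ of them equal $0$. Writing $D := \{\, j \in [n-1] : \delta_j = 1\,\}$ (the indices of strict descent) and its complement $\desc((i_1,\ldots,i_n)) := [n-1]\setminus D = \{\,j : i_j = i_{j+1}\,\}$, we see $|\desc((i_1,\ldots,i_n))| = m$, so $\desc$ lands in $\binom{[n-1]}{m}$ as claimed.

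Third, I would define the candidate inverse $\Phi: \binom{[n-1]}{m} \to S_{n,m}$ by: for $T \in \binom{[n-1]}{m}$, set $i_1 := n-m$ and recursively $i_{j+1} := i_j$ if $j \in T$ and $i_{j+1} := i_j - 1$ otherwise. By construction condition (iii) holds, (i) holds, and (ii) follows from the telescoping identity with the prescribed number of $0$-steps. Checking that $\desc\circ \Phi = \mathrm{id}$ and $\Phi \circ \desc = \mathrm{id}$ is immediate from the recursive definition, since each map reads off (resp.\ writes out) the same $0/1$-vector of differences.

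There is really no obstacle here; the only mild subtlety is the bookkeeping of which of the two complementary subsets of $[n-1]$ is called $\desc$, and confirming that the convention yields size $m$ rather than $n-m-1$. Once that is fixed, the result is a direct instance of the standard bijection between lattice paths with prescribed number of unit steps and subsets of the step positions.
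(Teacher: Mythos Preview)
Your argument is essentially the same as the paper's: both proofs observe that a sequence in $S_{n,m}$ is a weakly decreasing staircase whose steps are in $\{0,1\}$, hence is determined entirely by the positions of its strict drops, and both exhibit the inverse explicitly. Your version is slightly more explicit in writing out the difference vector $(\delta_1,\dots,\delta_{n-1})$ and verifying the telescoping identity, but the underlying idea is identical.

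One point deserves comment. You correctly compute that the number of strict descents is $i_1 - i_n = (n-m)-1 = n-m-1$, not $m$, and you resolve the resulting mismatch with the target $\binom{[n-1]}{m}$ by \emph{defining} $\desc$ to be the complementary set $\{j : i_j = i_{j+1}\}$. This makes the cardinalities line up, but it contradicts the proposition's own wording (``the set of indices of its descents'') and the paper's explicit convention that a descent is an index with $i_j > i_{j+1}$. In fact, the paper's proof asserts there are ``exactly $m$ descents,'' which, as your telescoping shows, is inconsistent with condition~(i) as written; the inverse map $\seq_n$ described immediately afterward satisfies $\seq_n(S)_1 = 1 + |S|$, so the intended condition~(i) is almost certainly $i_1 = m+1$ rather than $i_1 = n-m$. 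Your instinct to flag the bookkeeping was right; just be aware that the fix should be to correct condition~(i) (or the target to $\binom{[n-1]}{n-1-m}$) rather than to redefine $\desc$ as the non-descent set.
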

\begin{proof}
    Let $s = (i_1, \ldots , i_n)$ be an element of $S_{n,m}$. Then in particular $s$ is a sequence such that $i_1 = n-m, i_n = 1$, and every element of the sequence is either equal to the previous element or exactly one less than it. In particular, $s$ is a non-increasing sequence with exactly $m$ descents $j_1, \ldots j_m$ (note that a \textit{descent} $j_k$ is an index such that $i_{j_k} > i_{j_k + 1}$). We define $f: S_{n,m} \rightarrow \binom{[n]}{m}$ via the rule $f(s) = \set{j_1, \ldots j_m}$.  To see that $f$ is a bijection, it suffices to note that every such sequence $s$ is completely determined by the locations of its descents, and every $m$-element subset of $[n]$ can occur as the descent set of some $s$. 
\end{proof}

From here on we will be more interested in the inverse function of Lemma~\ref{lem::sequences_to_subsets}, and hence will denote by $\seq_n: \nset{n} \rightarrow \bigsqcup_{m = 0}^n S_{n+1,m}$ the map that  takes a subset of $\nset{n}$ to its corresponding non-increasing sequence of length $n+1$. 

\begin{Ex}
    The following is an example of $\seq_4$ applied to subsets of the set $[4]$ of cardinality $2$:
    
    \begin{tabular}{ccc}
       $\seq_4(\set{1,2}) = (3,2,1,1,1),$  & $\seq_4(\set{1,3})  = (3,2,2,1,1),$ & $\seq_4(\set{1,4})  = (3,2,2,2,1),$ \\
        $\seq_4(\set{2,3})  = (3,3,2,1,1), $& $\seq_4(\set{2,4})  = (3,3,2,2,1), $& $\seq_4(\set{3,4})  = (3,3,3,2,1)$.
    \end{tabular}
    
\end{Ex}

\begin{remark}
\label{remark::how_to_compute_seq}
    Note that the $j$th element of the tuple $\seq_n(S)$ is exactly $1 + \#\{s \in S \, | \, s \geq j\}$. The parameter $n$ only determines the length of the tuple $\seq_n(S)$.
\end{remark}

These are all the ingredients we need to write down a first formula for $\widetilde{f}^{(n)}({\bf a};x)$.

\begin{Lem}
\label{lem::primitive_f_vec_arbitrary_net_flow_sum_over_sets}
    For all $n \in \NN$ and non-negative ${\bf a}$ of length $n$, a formula for $\widetilde{f}^{(n)}({\bf a};x)$ (that is, the primitive $f$-vector of  $\Flow_n({\bf a})$ written as a polynomial in $x$) is given by:
    \begin{equation}
    \label{eq::primitive_f_vec_sum_over_sets}
        \widetilde{f}^{(n)}({\bf a};x) = \frac{1}{x^n}\sum_{\supp({\bf a}') \subseteq S \subseteq \nset{n-1} }(-1)^{|S| + n + 1} \prod_{j \in \nset{n}}((x+ 1)^{\seq_{n-1}(S)_j} - 1 ) 
    \end{equation}
 where ${\bf a}' = (a_2, a_3, \ldots a_n)$, $\supp$ is the support function (namely $\supp({\bf a}')$ returns the set of indices $j$ such that $a_{j+1} \neq 0$).
\end{Lem}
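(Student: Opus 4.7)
The plan is to enumerate primitive ${\bf a}$-valid subgraphs of $K_{n+1}$ by their tuples of outgoing-edge sets, and then to apply inclusion-exclusion over the set of zero-netflow vertices.

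First, I would characterize the subgraphs being enumerated. For any primitive ${\bf a}$-valid $H$, each vertex $v_i$ with $i \in [n]$ must have at least one outgoing edge in $H$: a strictly positive ${\bf a}$-flow supported on $H$ forces every vertex with $a_i > 0$ to push out strictly more than it takes in, and every pass-through vertex ($a_i = 0$) lying in $V(H)$ to send out exactly what it receives (both positive since $v_i$ is in the support). Primitivity ($V(H) = [n+1]$ together with connectedness) therefore forces the outgoing-edge set $E_i \subseteq \{(v_i,v_k) : k>i\}$ to be non-empty for each $i\in[n]$, and for each $i\in[2,n]$ with $a_i=0$ the vertex $v_i$ must additionally have an incoming edge. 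Conversely, any tuple $(E_1,\ldots,E_n)$ satisfying these two conditions determines a primitive ${\bf a}$-valid subgraph: one can build an explicit positive ${\bf a}$-flow pushing mass toward $v_{n+1}$, and the resulting graph is automatically (weakly) connected because every vertex admits a directed path to $v_{n+1}$ by following outgoing edges. Under this bijection and Theorem~\ref{thm::Hille}, the first Betti number satisfies $\beta_1(H) = |E(H)| - n$, so
\[
\widetilde{f}^{(n)}({\bf a};x) \;=\; \frac{1}{x^n}\sum_{(E_1,\ldots,E_n)} \prod_{j=1}^n x^{|E_j|},
\]
summed over valid tuples.

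Second, I would set $T := \{i \in [2,n] : a_i = 0\}$ and perform inclusion-exclusion on the condition ``every $v_i$ with $i\in T$ has an incoming edge,'' rewriting the sum as $\sum_{T' \subseteq T}(-1)^{|T'|} N(T';x)$, where $N(T';x)$ counts tuples in which each $E_j$ is non-empty and no edge targets any $v_i$ for $i\in T'$. For fixed $T'$, the set $E_j$ is independently a non-empty subset of $\{(v_j,v_k) : j<k\leq n+1,\ k \notin T'\}$, a set of cardinality $t_j := (n+1-j) - |T'\cap(j,n]|$, giving $N(T';x) = \prod_{j=1}^n\bigl((x+1)^{t_j}-1\bigr)$.

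Finally, I would re-index via $S := [n-1] \setminus \{i-1 : i \in T'\}$, which is a bijection between subsets $T' \subseteq T$ and subsets $S\subseteq[n-1]$ containing $\supp({\bf a}')$. Under this bijection $|S| = n-1-|T'|$, hence $(-1)^{|T'|} = (-1)^{|S|+n+1}$; and a direct computation using Remark~\ref{remark::how_to_compute_seq} yields $t_j = 1 + |\{s\in S : s\geq j\}| = \seq_{n-1}(S)_j$ for every $j\in[n]$. Substituting these identifications produces~\eqref{eq::primitive_f_vec_sum_over_sets}. The main obstacle is the first step: establishing the bijection between tuples $(E_i)$ satisfying the outgoing/incoming conditions and primitive ${\bf a}$-valid subgraphs (in particular, verifying that a positive ${\bf a}$-flow always exists under these conditions and that weak connectedness is automatic); once this is in hand, the subsequent inclusion-exclusion and combinatorial translation to $\seq_{n-1}(S)$ are routine.
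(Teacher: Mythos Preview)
Your proposal is correct and follows essentially the same approach as the paper's proof: both characterize primitive ${\bf a}$-valid subgraphs via the out-degree and in-degree conditions, perform inclusion--exclusion over the zero-netflow vertices (your $T'$ is the paper's $S^c$ up to the index shift $i\mapsto i-1$), compute the resulting product $\prod_j\bigl((x+1)^{t_j}-1\bigr)$ by counting allowable targets for outgoing edges, and then identify $t_j$ with $\seq_{n-1}(S)_j$ via Remark~\ref{remark::how_to_compute_seq}. The only cosmetic difference is that the paper packages the inclusion--exclusion in terms of nested sets $R_S$ indexed by the complement, whereas you index directly by the forbidden set $T'$; the content is identical.
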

\begin{proof}
    The idea of the argument is as follows. First, we will start with the set of all primitive subgraphs of $K_{n+1}$ (not just ${\bf a}$-valid ones). We will then apply the principle of inclusion and exclusion in order to obtain the set of primitive subgraphs that are also ${\bf a}$-valid.

    With this in mind, let $T$ be a subset of $\set{v_2, \ldots v_n}$ and associate to $T$ its indicator set $S_T \subseteq \nset{n-1}$ in the canonical way (namely $i \in S_T$ if and only if $v_{i+1} \in T$). For each such $S$, define $R_S$ to be the set of primitive subgraphs of $K_{n+1}$ such that:
    \begin{enumerate}
        \item $outdeg(v_{i}) > 0$ for all $i \in [n]$, where $outdeg$ is the out-degree of the vertex $v_i$, and \label{list::item_1}
        \item $i \in S^c$ implies $indeg(v_{i+1}) = 0$, where $indeg(v_{i+1})$ is the in-degree of vertex $v_{i+1}$.
    \end{enumerate}
    In other words, $R_S$ is the set of primitive subgraphs of $K_{n+1}$ for which all vertices have nonzero out-degree and for which $S$ is the set of indices of vertices which are allowed to have non-zero in-degree. We claim that $S_1 \subseteq S_2$ implies $R_{S_1} \subseteq R_{S_2}$. 

    To prove the claim, let $S_1 \subseteq S_2$ and let $H$ be a graph in $R_{S_1}$. To show that $H$ is also in $R_{S_2}$, we need to prove that for every $i \in S_2^c$, the in-degree of $v_{i+1}$ in $H$ is $0$. So let us take $i$ in $S_2^c$. Now since $S_1 \subseteq S_2$, it follows that $S_2^c \subseteq S_1^c$ and so $i$ is an element of $S_1^c$. However, since $H$ is in $S_1$ by assumption, this in turn means that the $indeg(v_{i+1}) = 0$. Hence we have shown that $i \in S_2^c$ implies $indeg(v_{i+1}) = 0$, and so $H$ in $R_{S_2}$. 
    
    Now by the claim, the cardinality of the set $\Prim_{\bf a}$ of ${\bf a}$-valid primitive subgraphs of $K_{n+1}$ may be found via inclusion-exclusion as follows: 
    \begin{equation}
    \label{eq::lemma_proof_eq1}
        |\Prim_{\bf a}| = \sum_{\supp({\bf a}') \subseteq S \subseteq \nset{n-1}  }(-1)^{|S| + n + 1} |R_S|,
    \end{equation}
    where the lowest set in the interval of summation is $\supp({\bf a}')$ since the elements of any subset of $R_{\supp({\bf a}')}$ are ${\bf a}$-valid. Next, if we let $r_S(x)$ be the generating function over the set $R_S$ that keeps track of the sum of all out-degrees of each graph in $R_S$, then we claim that:
    \begin{equation}
    \label{eq::lemma_proof_eq2}
        r_S(x) := \sum_{H \in R_S} x^{\sum_{v_i \in V(H)} outd(v_i)} = \prod_{j \in \nset{n}}((x+ 1)^{\seq_{n-1}(S)_j} - 1 ).
    \end{equation}

    To prove this claim, let us fix some $S$ such that $\supp({\bf a}') \subseteq S \subseteq \nset{n-1} $ and compute $r_S(x)$ explicitly. For fixed $j \in [n]$, consider the $j$th vertex from last, namely vertex $v_{n+1 - j}$. Among the vertices $v_{n+2 - j}, \ldots, v_{n+1}$ that come after $v_{n+1 - j}$, those that $v_{n+1 - j}$ may be adjacent to are exactly those whose indices are determined by $S$. Hence vertex $v_{n+1 - j}$ contributes a factor of $(x+1)^k - 1$ where $k$ is the number $1 + \#\{s \in S \, | \, s \geq j\}$ (where we use inclusive inequality since the numbers in $S$ are the indices of the vertices shifted by $1$). Note that we subtract by $1$ here since the term $1$ corresponds to the choice that $v_{n+1 - j}$ has out-degree $0$ which violates condition~(\ref{list::item_1}) of the definition of $R_S$. However, by Remark~\ref{remark::how_to_compute_seq}, this number $k$ is exactly the $(n+1 - j)$th index of $\seq_{n-1}(S)$ (where the subscript is $n-1$ since vertex $v_{n+1}$ must have out-degree $0$ and vertex $v_{n}$ must have out-degree $1$). Finally, by condition~(\ref{list::item_1}) of the definition of $R_S$ above, every vertex has nonzero out-degree, meaning the contribution of one vertex to $r_S(x)$ will be independent of all the others. After re-indexing, we find that $r_S(x)$ is exactly as written in Equation~\eqref{eq::lemma_proof_eq2}. 

    Combining equations \eqref{eq::lemma_proof_eq1} and \eqref{eq::lemma_proof_eq2} gives a generating function over the set $\Prim_{\bf a}$ keeping track of the sum of all out-degrees of each graph. Finally, since our graphs are primitive, the first Betti number of each graph is exactly the sum of all out-degrees minus $n$, from which the final formula follows. 
\end{proof}

\begin{figure}
    \centering
    \includegraphics[width = \textwidth]{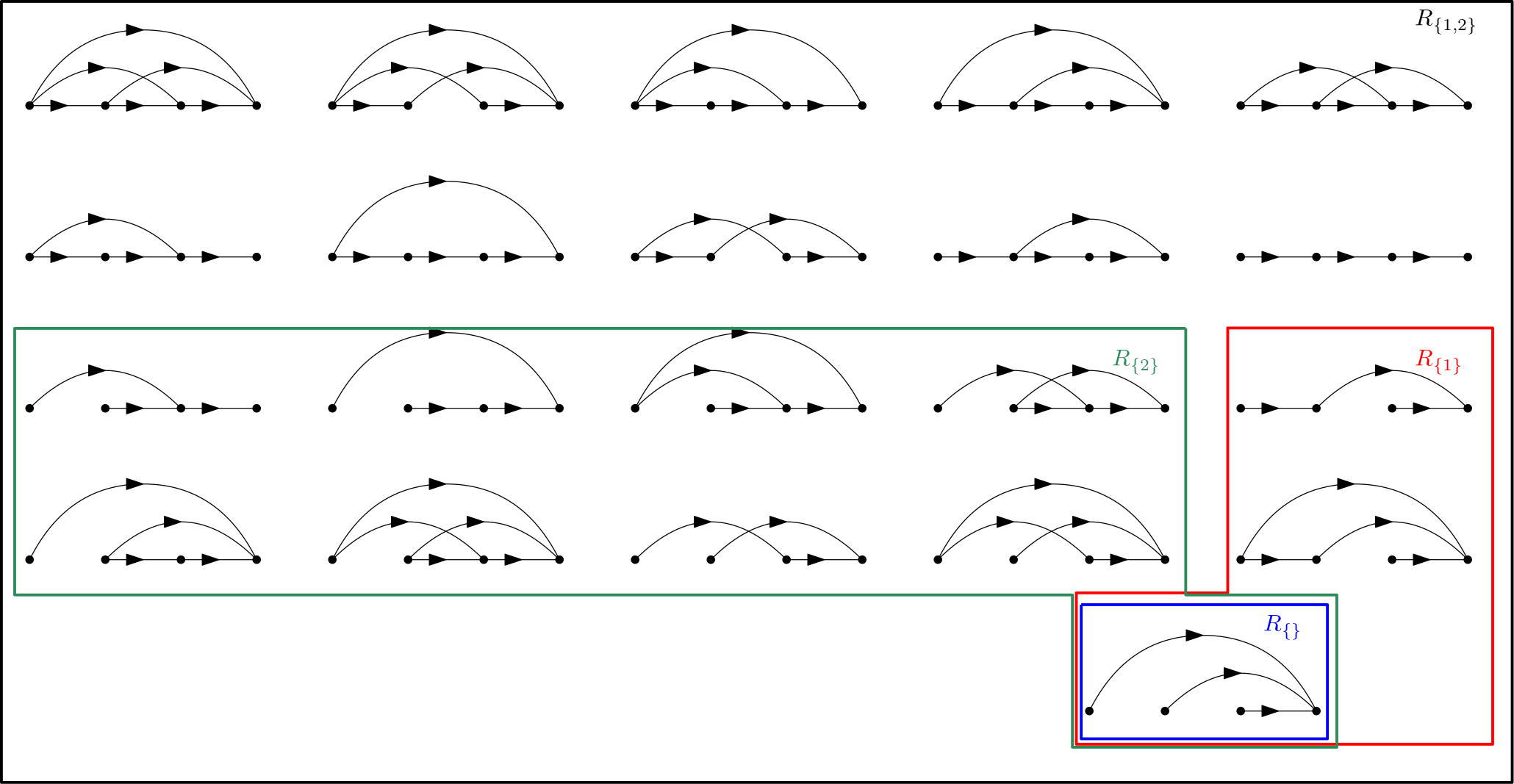}
    \caption{The sets of primitive graphs $R_S$ for $n = 3$, as described in the proof of Lemma~\ref{lem::primitive_f_vec_arbitrary_net_flow_sum_over_sets}. The nested boxes illustrate the claim that $S_1 \subseteq S_2$ implies $R_{S_{1}} \subseteq R_{{
    S_2}}$ in the proof of the lemma.}
    \label{fig::nested_sets_of_graphs}
\end{figure}

\begin{Ex}
Fix $n = 3$ and consider ${\bf a} = (1,0,0)$, i.e. the case of $CRY_3$. By the lemma we have:
\begin{multline}
\label{eq::example_calculation_f_poly_cry_3}
    \widetilde{f}^{(3)}((1,0,0); x) =  ((x+1)^3 - 1)\cdot((x+1)^2 - 1)\cdot((x+1)^1 - 1)
    -((x+1)^2 - 1)\cdot((x+1)^1 - 1)\cdot((x+1)^1 - 1)\\
    - ((x+1)^2 - 1)\cdot((x+1)^2 - 1)\cdot((x+1)^1 - 1)
    +((x+1)^1 - 1)\cdot((x+1)^1 - 1)\cdot((x+1)^1 - 1)
\end{multline}
which simplifies to:
\begin{align*}
    \widetilde{f}^{(3)}((1,0,0); x) &=  x^6 + 4x^5 + 4x^4 + x^3
\end{align*}
and agrees with Table~\ref{tab::primitive_f_vectors_of_CRY_n} after shifting exponents, as expected. Each of the terms in Equation~\eqref{eq::example_calculation_f_poly_cry_3} above corresponds to one of the groupings of graphs in Figure~\ref{fig::nested_sets_of_graphs}. For example, consider the set $R_{\{2\}}$ of primitive graphs in $K_4$ depicted in the green grouping of Figure~\ref{fig::nested_sets_of_graphs}. We compute $\seq_{2}(\{ 2\}) = (2,2,1)$, since there is one descent in index $2$ and the tuple has length $3$. Hence by the proof of Lemma~\ref{lem::primitive_f_vec_arbitrary_net_flow_sum_over_sets} we have:
$$r_{\{2\}}(x) = ((x+1)^2 - 1)\cdot((x+1)^2 - 1)\cdot((x+1)^1 - 1).$$
After expanding this becomes:
$$r_{\{2\}}(x)  = x^5 + 4x^4 + 4x^3.$$
Subtracting each exponent by $n = 3$, this computation says that there should be $1$ graph having first Betti number $2$, $4$ graphs having first Betti number $1$, and $4$ graphs having first Betti number $0$, exactly as seen in the green grouping $R_{\{2\}}$ of Figure~\ref{fig::nested_sets_of_graphs}. The other terms are computed similarly.
\end{Ex}

The next result describes how the $f$-polynomial of $\Flow_n({\bf a})$ may be obtained easily as a sum of primitive $f$-polynomials.

\begin{Lem}
    \label{lem::generalized_psi_xi_relationship}
    For all $n \in \NN$ and non-negative ${\bf a}$ of length $n$:
\begin{equation}
    f^{(n)}({\bf a};x) = \frac{1}{x} + \sum_{{\bf b} \leqc {\bf a}} k_{{\bf a}, {\bf b}} \widetilde{f}^{(\ell(\bf b))}({\bf b};x)
\end{equation}
where ${\bf b} \leqc {\bf a}$ if ${\bf b}$ can be obtained from ${\bf a}$ by deleting some subset (possibly empty) of the zeros in ${\bf a}$, $\ell(\bf b)$ is the length of ${\bf b}$ and where $k_{{\bf a}, {\bf b}}$ is the number of ways of deleting $0$'s from ${\bf a}$ to obtain ${\bf b}$. 
\end{Lem}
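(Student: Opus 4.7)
The plan is to translate the statement into a combinatorial identity about ${\bf a}$-valid subgraphs of $K_{n+1}$ via Hille's theorem (Theorem~\ref{thm::Hille}), which identifies faces of $\Flow_n({\bf a})$ with such subgraphs and their dimensions with the first Betti number. The $\frac{1}{x}$ term on the right corresponds to the empty face (equivalently, the empty subgraph), so the real work is to show that the generating function for \emph{nonempty} ${\bf a}$-valid subgraphs by $\beta_1$ equals $\sum_{{\bf b} \leqc {\bf a}} k_{{\bf a},{\bf b}} \widetilde{f}^{(\ell({\bf b}))}({\bf b};x)$.

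The strategy is to partition the nonempty ${\bf a}$-valid subgraphs of $K_{n+1}$ by their vertex set $V(H)$ and identify each class with a set of primitive ${\bf b}$-valid subgraphs of a smaller complete graph. For this I would first prove two structural facts about an arbitrary nonempty ${\bf a}$-valid subgraph $H$: (i) $V(H)$ contains every $v_i$ with $a_i \neq 0$ and also $v_{n+1}$, since those are precisely the vertices of nonzero netflow; and (ii) $H$ is connected. The second fact is the crux: if $C$ were a connected component of $H$ not containing $v_{n+1}$, then summing the flow-conservation equation over $C$ would give $\sum_{v_i \in C} a_i = 0$, forcing every such $a_i$ to vanish since the netflow is nonnegative. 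But then the restriction of the flow to $C$ would be a nonzero circulation on a sub-DAG of the transitively oriented $K_{n+1}$, which is impossible, so $C$ has no edges and thus contributes no vertices to $V(H)$.

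With these facts in hand, I would write $V(H)\setminus\{v_{n+1}\} = \{v_{i_1} < \cdots < v_{i_m}\}$ and set ${\bf b} := (a_{i_1},\dots,a_{i_m})$. By the bitstring description of $\leqc$ in Remark~\ref{rem::composition_posets_as_bitstrings}, ${\bf b} \leqc {\bf a}$. Relabeling the vertices of $V(H)$ order-preservingly by $[\ell({\bf b})+1]$ turns $H$ into a primitive ${\bf b}$-valid subgraph of $K_{\ell({\bf b})+1}$, and this relabeling (being a graph isomorphism) preserves the first Betti number. Conversely, any primitive ${\bf b}$-valid subgraph of $K_{\ell({\bf b})+1}$ pulls back along the inverse relabeling to a nonempty ${\bf a}$-valid subgraph of $K_{n+1}$ with the prescribed vertex set, giving a Betti-preserving bijection.

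To finish, for each fixed ${\bf b} \leqc {\bf a}$ I would count the vertex sets $V \subseteq \{v_1, \ldots, v_{n+1}\}$ that contain $v_{n+1}$ and produce ${\bf b}$ under the restriction above: by definition, this count is exactly $k_{{\bf a},{\bf b}}$ (the number of ways of deleting $0$'s from ${\bf a}$ to obtain ${\bf b}$). Summing the primitive generating functions weighted by these counts, and adding the $\frac{1}{x}$ term for the empty face, yields the claimed identity. I expect the main obstacle to be the connectivity step; once that is in place the rest reduces to bookkeeping with the bijection.
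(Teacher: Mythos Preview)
Your proposal is correct and follows essentially the same route as the paper: partition nonempty ${\bf a}$-valid subgraphs of $K_{n+1}$ by their vertex set, relabel to identify each class with primitive ${\bf b}$-valid subgraphs of a smaller complete graph, and count multiplicities by $k_{{\bf a},{\bf b}}$. The only notable difference is that you explicitly justify the connectivity of $H$ (via the circulation-on-a-DAG argument), whereas the paper simply asserts that $H$ restricted to $S_H$ is primitive without further comment; your added detail is sound and strengthens the exposition.
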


\begin{proof}
    Let $F$ be a face of $\Flow_n({\bf a})$. If $F$ is the empty face, then it does not correspond to a primitive graph and hence contributes a term of  $\dfrac{1}{x}$ to $f^{(n)}({\bf a};x)$. Otherwise, $F$ is non-empty and hence corresponds to an ${\bf a}$-valid subgraph $H \subseteq K_{n+1}$ by Theorem~\ref{thm::Hille}. Let $S_H \subseteq \set{v_1, \ldots, v_{n+1}}$ be the set of vertices which are part of the support of a flow determining $H$. Then $H$ is a primitive graph when restricted to the vertex set $S_H$, hence is counted by $\widetilde{f}^{(|b|)}({\bf b};x)$ for some ${\bf b}$ determined by $S_H$. 
    
    The possible ${\bf b}$'s that can appear are exactly those described in the lemma statement. To see this, note that deleting the entries which are $0$ from the string is the same as restricting the graph $H$ to the vertices in the set $S_H$. On the other hand, entries of $1$ may not be deleted since every such entry corresponds to a source of flow and hence will be included in the support of any ${\bf a}$-valid graph. By definition of $k_{{\bf a}, {\bf b}}$, each string ${\bf b}$ appears $k_{{\bf a}, {\bf b}}$-many times.
\end{proof}

\begin{remark}
    We can give an explicit formula for $k_{{\bf a}, {\bf b}}$ after using the correspondence between $0$-$1$ vectors and compositions as described in Remark~\ref{rem::composition_posets_as_bitstrings}; see Lemma~\ref{lem::f_vec_as_sums_of_downsets_of_poset}.
\end{remark}

\begin{Ex}
    As an example, Lemma~\ref{lem::generalized_psi_xi_relationship} would give us the following relation:
    \begin{multline*}
        f^{(6)}(1,0,0,1,1,0;x) = \frac{1}{x} +\widetilde{f}^{(6)}(1,0,0,1,1,0;x) + 2\widetilde{f}^{(5)}(1,0,1,1,0;x) + \widetilde{f}^{(5)}(1,0,0,1,1;x)   \\ + \widetilde{f}^{(4)}(1,1,1,0;x)  + 2\widetilde{f}^{(4)}(1,0,1,1;x) + \widetilde{f}^{(3)}(1,1,1;x)
    \end{multline*}
    where the coefficient $2$ arises in front of  $\widetilde{f}^{(5)}(1,0,1,1,0;x)$, for example, as there are two ways to delete zeros that result in this input vector.
    
\end{Ex}

\subsection{Formulas as evaluations of sums of polynomials}
\label{subsect::formulas_as_evaluations_of_sums_of_quasisymmetric_polynomials}

We can rewrite Lemma~\ref{lem::primitive_f_vec_arbitrary_net_flow_sum_over_sets} as an evaluation of a certain quasisymmetric-like polynomial by using the standard bijection of subsets of $\nset{n-1}$ with integer compositions of $n$. Indeed, given a composition $\alpha$ and corresponding set $S_{\alpha}$ we define the multivariate polynomial:
\begin{equation}
\label{eq::polynomial_P}
    P_{\alpha}(x_1, \ldots, x_n) := \sum_{\beta \geqc \alpha} (-1)^{n - \ell(\beta)} {\bf x}^{\beta}  
\end{equation}
where ${\bf x}^{\beta} := x_1^{\beta_1}\cdots x_{\ell(\beta)}^{\beta_{\ell(\beta)}}$, and where the relation $\geqc$ is \textit{reverse refinement} on compositions (see Section~\ref{subsect::posets_of_compositions}).  

\begin{remark}
    The polynomial $P_{\alpha}$ may look familiar to the reader. Indeed, we recall that the \defn{monomial quasisymmetric functions}, $M_{\alpha}$, and Gessel's \defn{fundamental quasisymmetric functions}, $F_{\alpha}$, are defined in infinitely many variables $x_i$ respectively via:
\begin{equation*}
    M_{\alpha} := \sum_{i_1 < i_2 < \ldots < i_k} x_{i_1}^{\alpha_1} x_{i_2}^{\alpha_2} \cdots x_{i_k}^{\alpha_k}, \hspace{2cm}  F_{\alpha} := \sum_{\substack{i_1 \leq i_2 \leq \ldots \leq i_k \\ i_j < i_{j+1} \text{ if } j \in S_{\alpha}}} x_{i_1}x_{i_2} \cdots x_{i_k}.
\end{equation*}
A standard result of quasisymmetric functions describes how to write the monomial quasisymmetric functions in terms of Gessel's fundamental quasisymmetric functions and vice versa. Namely we have the equations (c.f. \cite[Sect 7.19]{EC2}\cite{Petersen_2005}):
\begin{equation}
    F_{\alpha} = \sum_{\beta \succeq \alpha} M_{\beta}, \hspace{2cm}  M_{\alpha} = \sum_{\beta \succeq \alpha} (-1)^{\ell(\beta) - \ell(\alpha)} F_{\beta}.
\end{equation}

Hence, the polynomial $P_{\alpha}(x_1, \ldots x_n)$ from above is exactly the expansion of $M_{\alpha}$ into the fundamental basis, except that we only keep the first term of each $F_{\beta}$; that is:

\begin{equation}
    P_{\alpha}(x_1, \ldots , x_n) = (-1)^{n + \ell(\alpha)}\sum_{\beta \succeq \alpha} (-1)^{\ell(\beta) - \ell(\alpha)} F_{\beta}(x_1, \ldots, x_{\ell(\beta)}) .
\end{equation}
\end{remark} 

The polynomials $P_\alpha$ capture all of the data needed to compute the primitive $f$-vector $\widetilde{f}_n({\bf a};x)$.

\begin{Lem}
   \label{lem::primitive_f_vec_as_quasi_sym}
    For all $n \in \NN$ and non-negative ${\bf a}$ of length $n$, let $\alpha$ be the composition of $n$ given by $\alpha = \revcomp({\bf a})$. Then the primitive $f$-vector of $\Flow_n({\bf a})$ written as a polynomial is given by:
    \begin{equation}
    \label{eq::primitive_f_vec_as_quasi_sym}
        \widetilde{f}^{(n)}({\bf a}; x) = \frac{1}{x^n}P_{\alpha}(x, (x+1)^2 - 1, \ldots, (x+1)^{n} - 1)
    \end{equation}
\end{Lem}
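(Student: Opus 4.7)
The plan is to reduce this statement to Lemma~\ref{lem::primitive_f_vec_arbitrary_net_flow_sum_over_sets} by re-expressing its sum over subsets as precisely the sum over compositions that defines $P_\alpha$. The central tool is the bijection between subsets $S = \{s_1 < \cdots < s_m\} \subseteq [n-1]$ and integer compositions of $n$; to match the right-to-left convention built into $\revcomp$, I will use the ``reversed'' version, sending $S$ to $\beta := (n - s_m,\, s_m - s_{m-1},\, \ldots,\, s_2 - s_1,\, s_1)$. A direct computation (already carried out in spirit in Example~\ref{ex::revcomp_ex}) shows that $\supp({\bf a}')$ maps to $\alpha = \revcomp({\bf a})$ under this bijection, and that adjoining a new index to $S$ corresponds to splitting a single part of the associated composition. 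Hence the summation range $\supp({\bf a}') \subseteq S \subseteq [n-1]$ is in bijection with $\{\beta : \beta \geqc \alpha\}$ (i.e.\ $\beta$ refining $\alpha$), and since $|S| = \ell(\beta) - 1$ the sign becomes $(-1)^{|S| + n + 1} = (-1)^{n - \ell(\beta)}$, matching the sign in $P_\alpha$.

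The key computation is then matching the product $\prod_{j \in [n]}((x+1)^{\seq_{n-1}(S)_j} - 1)$ with ${\bf x}^\beta$ evaluated at $x_i = (x+1)^i - 1$. By Remark~\ref{remark::how_to_compute_seq}, the sequence $\seq_{n-1}(S)$ is non-increasing, taking the constant value $\ell(\beta)$ on the first $\beta^{\mathrm{std}}_1 = s_1$ positions, value $\ell(\beta) - 1$ on the next $\beta^{\mathrm{std}}_2 = s_2 - s_1$ positions, and so on, where $\beta^{\mathrm{std}}$ denotes the (left-to-right) standard descent composition, equal to the reverse of $\beta$. Grouping the $n$ factors of the product by this constant value and then reindexing via $i = \ell(\beta) + 1 - k$ rewrites the product as $\prod_{i=1}^{\ell(\beta)}((x+1)^i - 1)^{\beta_i}$, which is exactly the claimed specialization of ${\bf x}^\beta$. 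Assembling these three ingredients — range of summation, sign, and monomial — turns the formula of Lemma~\ref{lem::primitive_f_vec_arbitrary_net_flow_sum_over_sets} term by term into the specialization of $P_\alpha$ appearing in Equation~\eqref{eq::primitive_f_vec_as_quasi_sym}.

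The main obstacle is purely notational bookkeeping: one must carefully match the right-to-left reading inherent in $\revcomp$ against the left-to-right indexing used in both $\seq_{n-1}$ and the standard subset-to-composition bijection, making sure the ``reverse'' is applied exactly once so that $\beta_i$ ends up multiplied by $(x+1)^i - 1$ rather than $(x+1)^{\ell(\beta)+1-i} - 1$. Once this indexing is handled correctly, no nontrivial combinatorial identity is required; the lemma follows from the three identifications above and a direct substitution into Lemma~\ref{lem::primitive_f_vec_arbitrary_net_flow_sum_over_sets}.
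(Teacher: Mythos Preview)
Your proposal is correct and is essentially the same argument as the paper's: both translate the sum in Lemma~\ref{lem::primitive_f_vec_arbitrary_net_flow_sum_over_sets} via the (reversed) subset-to-composition bijection, checking in turn that the summation interval, the sign, and the product of factors become the ingredients of $P_\alpha$ evaluated at $x_i=(x+1)^i-1$. Your write-up is in fact slightly more explicit than the paper's about the reindexing that turns the run-lengths of $\seq_{n-1}(S)$ into the parts $\beta_i$, which is exactly the bookkeeping you flagged as the only obstacle.
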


\begin{proof}
    The proof follows from Lemma~\ref{lem::primitive_f_vec_arbitrary_net_flow_sum_over_sets} by applying $\comp \circ \rev$ to every subset appearing in the sum Equation~\eqref{eq::primitive_f_vec_sum_over_sets}, where $\rev$ is as defined in Definition~\ref{def::revcomp} and $\comp$ is the standard map sending a subset of $[n]$ to a composition of $n+1$. 
    
    To begin, observe that the interval of summation $\supp({\bf a}') \subseteq S \subseteq \nset{n-1} $ from Equation~\eqref{eq::primitive_f_vec_sum_over_sets} translates under $\comp \circ \rev$ to the interval $\{ \beta \models n \, | \, \revcomp(\bf{a}) \leqa \beta\}$ of the poset of compositions $(\mathcal{C}_n, \leqa)$. To see this, note from Remark~\ref{rem::composition_posets_as_bitstrings} that subset inclusion translates under this bijection to reverse refinement of compositions; the largest set $[n-1]$ translates under the bijection to the most-refined composition, $(1^n)$; and the length of the composition $\alpha_S := (\comp \circ \rev)(S)$ is $\ell(\alpha_S ) = |S| - 1$.    

    All that remains is to show what happens to the factor $\prod_{j \in \nset{n}}((x+ 1)^{\seq_{n-1}(S)_j} - 1 )$ under $\comp \circ \rev$. For a fixed set $S$ and corresponding composition $\alpha_S := (\comp \circ \rev)(S)$. We claim that the $i$th entry of $\alpha_S$ is exactly the number of entries of $\seq(S)$ that are equal to $i$. To see this, note that if $S = \{s_1, \ldots, s_k \}$ where $s_1 < \ldots < s_k$, then $\alpha_S$  is exactly the composition $(n + 1 - s_k , s_k - s_{k-1}, \ldots , s_2 - s_1, s_1)$. Meanwhile,  we know from Remark~\ref{remark::how_to_compute_seq} that the $j$th element of the tuple $\seq_n(S)$ is exactly $1 + \#\{s \in S \, | \, s \geq j\}$. Hence the numbers $s_{i+1} - s_i$ which form the entries of $\alpha_S$ are giving exactly the lengths of the runs in $\seq(S)$ where the values are constant. Hence the product $\prod_{j \in \nset{n}}((x+ 1)^{\seq_{n-1}(S)_j} - 1 )$ corresponds under the map $\comp \circ \rev$ with the monomial ${\bf x}^{\alpha_S}|_{x_{i} = (x+1)^i - 1}$. This concludes the proof.
\end{proof}

\begin{Ex}
    Here we demonstrate an example of the correspondence between terms of Equation~\eqref{eq::primitive_f_vec_sum_over_sets} and terms of Equation~\eqref{eq::primitive_f_vec_as_quasi_sym} as described in the proof of Lemma~\ref{lem::primitive_f_vec_as_quasi_sym}.  Consider $n = 7$ and the netflow vector ${\bf a} = (1,0,0,1,1,0,1)$. Computing the contribution of the first term in Equation~\eqref{eq::primitive_f_vec_sum_over_sets}, we find that $\supp({\bf a}) = \{3,4,6\}$. Applying $\seq$ to this set, we compute $\seq_7(\{3,4,6\}) = (4,4,4,3,2,2,1)$. Hence the first term of Equation~\eqref{eq::primitive_f_vec_sum_over_sets} is: 
    $$-((x+1)^4 - 1)^3((x+1)^3 - 1)^1((x+1)^2 - 1)^2((x+1)^1 - 1)^1.$$ 
    Note that the sign is negative since $|\{3,4,6\}| + n  + 1 = 11$ is odd. 
    
    On the other hand, to compute the first term of Equation~\eqref{eq::primitive_f_vec_as_quasi_sym} we compute $\revcomp({\bf a}) = (1,2,1,3)$. This produces the monomial $x_1^1x_2^2x_3^1x_4^3$, which we then evaluate at $x_i = (x+1)^i - 1$. The result is 
    $$-((x+1)^1 - 1)^1((x+1)^2 - 1)^2((x+1)^3 - 1)^1((x+1)^4 - 1)^3,$$ where the sign is negative since $n - \ell((1,2,1,3))$ = 3 is odd.
\end{Ex}

We may now combine Lemma~\ref{lem::generalized_psi_xi_relationship} and Lemma~\ref{lem::primitive_f_vec_as_quasi_sym} to obtain an explicit formula for the $f$-vector of  $\Flow_n({\bf a})$ for ${\bf a}$ non-negative.

\begin{Lem}
\label{lem::f_vec_as_sums_of_downsets_of_poset}
     For all $n \in \NN$ and non-negative ${\bf a} \in \NN^n$, let $\alpha$ be the composition of $n$ given by $\alpha = \revcomp({\bf a})$. Then the $f$-vector of $\Flow_n({\bf a})$ written as a Laurent polynomial is given by:
    \begin{equation}
        f^{(n)}({\bf a}; x) = \frac{1}{x} + \frac{1}{x^n}\sum_{\beta \leqc \alpha} x^{|\alpha| - |\beta|}\left(\prod_{i = 1}^{\ell(\alpha)}\binom{\alpha_i - 1}{\alpha_i - \beta_i}\right)  P_{\beta}(x, (x+1)^2 - 1, \ldots, (x+1)^{|\beta|}  - 1)
    \end{equation}
\end{Lem}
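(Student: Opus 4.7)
The plan is to assemble the formula by substituting the explicit primitive $f$-polynomial of Lemma~\ref{lem::primitive_f_vec_as_quasi_sym} into the expansion of Lemma~\ref{lem::generalized_psi_xi_relationship}, and then translating the resulting sum over bit strings into a sum over compositions via the bijection $\revcomp$.

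First I would recall that Lemma~\ref{lem::generalized_psi_xi_relationship} gives
\[
f^{(n)}({\bf a};x) \;=\; \frac{1}{x} \;+\; \sum_{{\bf b} \leqc {\bf a}} k_{{\bf a},{\bf b}}\,\widetilde{f}^{(\ell({\bf b}))}({\bf b};x),
\]
where the sum runs over the downset of ${\bf a}$ in $(\mathcal{B},\leqc)$. By Remark~\ref{rem::composition_posets_as_bitstrings}, the bijection $\revcomp$ sends this downset to the downset $\{\beta \leqc \alpha\}$ in $(\mathcal{C},\leqc)$, where $\alpha = \revcomp({\bf a})$; moreover, under this bijection we have $\ell({\bf b}) = |\beta|$ and $\ell(\beta) = \ell(\alpha)$ (deleting a $0$-entry from ${\bf a}$ leaves the number of $1$'s, hence the number of blocks, unchanged).

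The next step is to compute $k_{{\bf a},{\bf b}}$ combinatorially. Reading ${\bf a}$ from right to left partitions it into blocks of sizes $\alpha_1,\alpha_2,\ldots,\alpha_{\ell(\alpha)}$, each of which contains exactly one $1$ (namely the leftmost entry of the block) and $\alpha_i - 1$ zeros. A sequence ${\bf b}$ with $\revcomp({\bf b}) = \beta$ is obtained from ${\bf a}$ by deleting zeros from within each block so that block $i$ has size $\beta_i$, which requires deleting exactly $\alpha_i - \beta_i$ of the $\alpha_i - 1$ available zeros from that block. Since these choices are independent across blocks, I get
\[
k_{{\bf a},{\bf b}} \;=\; \prod_{i=1}^{\ell(\alpha)}\binom{\alpha_i - 1}{\alpha_i - \beta_i}.
\]

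Finally I substitute the formula of Lemma~\ref{lem::primitive_f_vec_as_quasi_sym}, namely $\widetilde{f}^{(|\beta|)}({\bf b};x) = \frac{1}{x^{|\beta|}}P_{\beta}(x,(x+1)^2-1,\ldots,(x+1)^{|\beta|}-1)$, and factor out $1/x^n$ using $|\alpha| = n$ so that $1/x^{|\beta|} = x^{|\alpha|-|\beta|}/x^n$. Collecting terms yields exactly the claimed identity. The only non-routine step is the binomial count of $k_{{\bf a},{\bf b}}$; everything else is substitution and re-indexing.
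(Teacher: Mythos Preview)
Your proposal is correct and follows essentially the same argument as the paper: combine Lemma~\ref{lem::generalized_psi_xi_relationship} with Lemma~\ref{lem::primitive_f_vec_as_quasi_sym}, translate the sum over bit strings to compositions via $\revcomp$, identify $k_{{\bf a},{\bf b}}$ with the product of binomials by counting zero-deletions block by block, and pull out the common denominator $x^n$. Your explanation of the block structure behind $k_{{\bf a},{\bf b}}$ is in fact slightly more detailed than the paper's.
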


\begin{proof}
    Combining the results of Lemma~\ref{lem::generalized_psi_xi_relationship} and Lemma~\ref{lem::primitive_f_vec_as_quasi_sym} we obtain:
    \begin{equation*}
        f^{(n)}({\bf a};x) = \frac{1}{x} + \sum_{{\bf b} \leqc {\bf a}} \frac{1}{x^{|{\bf b}|}}k_{{\bf a}, {\bf b}} P_{\beta}(x, (x+1)^2 - 1, \ldots, (x+1)^{|\beta| - 1})
    \end{equation*}
    where $\leqc$ is the partial order on $0$-$1$ vectors described in Lemma~\ref{lem::generalized_psi_xi_relationship}  and $\beta = \revcomp(\bf b)$. Translating both ${\bf a}$ and ${\bf b}$ into compositions via $\revcomp$, we find that the resulting partial order is exactly that of $(\mathcal{C}, \leqc)$ (see Figure 2 and Remark~\ref{rem::composition_posets_as_bitstrings}). Indeed, the number of parts of $\revcomp({\bf a})$ corresponds to the number of $1$'s appearing in ${\bf a}$, and deleting a $0$ in ${\bf a}$ corresponds to decreasing the corresponding part of $\revcomp{({\bf a})}$ by $1$. In Lemma~\ref{lem::generalized_psi_xi_relationship} we are only able to delete $0$'s and not $1$'s, hence the number of parts of $\revcomp({\bf b})$ must be the same as the number of parts of $\revcomp({\bf a})$. Next, we note that the product of binomial coefficients keeps track of the number of ways of deleting $0$'s from ${\bf a}$ that result in the same ${\bf b}$, hence $k_{{\bf a}, {\bf b}} = \prod_{i = 1}^{\ell(\alpha)}\binom{\alpha_i - 1}{\alpha_i - \beta_i}$. Finally, the factor of $x^{|\alpha| - |\beta|}$ arises as a result of taking the common denominator of all terms.
\end{proof}

All of the work has now been done in order to prove our main result.

\MainResult

\begin{proof}
    Lemma~\ref{lem::f_vec_as_sums_of_downsets_of_poset} gives $f^{(n)}({\bf a}; x)$ as an evaluation of a linear combination of $P_{\beta}$'s coming from downsets of the poset $(\mathcal{C}, \leqc)$. However, each $P_{\beta}$ is also a sum over compositions (Equation~\eqref{eq::polynomial_P}). We may expand the $P_{\beta}$'s to obtain:
    \begin{align}
        f^{(n)}({\bf a}; x) &= \frac{1}{x} + \frac{1}{x^n}\sum_{\beta \leqc \alpha} x^{|\alpha| - |\beta|}\left(\prod_{i = 1}^{\ell(\alpha)}\binom{\alpha_i - 1}{\alpha_i - \beta_i}\right)  \sum_{\gamma \geqa \beta} (-1)^{|\beta| - \ell(\gamma)} {\bf x}^{\gamma}|_{x_i = (x+1)^i - 1} \nonumber\\
        &= \frac{1}{x} + \frac{1}{x^n}\sum_{\beta \leqc \alpha} \sum_{\gamma \geqa \beta} \left(\prod_{i = 1}^{\ell(\alpha)}\binom{\alpha_i - 1}{\alpha_i - \beta_i}\right)(-1)^{|\beta| - \ell(\gamma)} x_1^{|\alpha| - |\beta|} {\bf x}^{\gamma}|_{x_i = (x+1)^i - 1} \label{eq::step_of_main_theorem}
    \end{align}
Now note that $\leqc$ preserves the length of compositions and $\leqa$ preserves the size of compositions. From this we can conclude that each of the monomials $x_1^{|\alpha| - |\beta|} {\bf x}^{\gamma}|_{x_i = (x+1)^i - 1}$ has total degree $|\alpha| - |\beta| + |\gamma| = |\alpha| = n$, and so the sum (before taking into account the denominator outside) is a homogeneous polynomial of degree $n$. In particular, this means that every term of Equation~\eqref{eq::step_of_main_theorem} is indexed by a composition $\gamma' \composition n$. 

Our strategy to prove the result will be to prove a stronger statement instead: namely that the two multivariate polynomials indexed by compositions $\gamma'$ are equal before evaluating. In particular, we make this claim a separate lemma (Lemma~\ref{lem::helper_lemma_to_main_result}). The result will follow from Lemma~\ref{lem::helper_lemma_to_main_result} by evaluating Equation~\eqref{eq::main_result_proof_multivar_identity} at $x_i = (x+1)^i - 1$ for all $i$.
\end{proof}

\begin{Lem}
\label{lem::helper_lemma_to_main_result}
    \begin{equation}
\label{eq::main_result_proof_multivar_identity}
    \sum_{\beta \leqc \alpha} x_1^{|\alpha| - |\beta|}\left(\prod_{i = 1}^{\ell(\alpha)}\binom{\alpha_i - 1}{\alpha_i - \beta_i}\right)  P_{\beta}(x_1, x_2, \ldots, x_{|\beta|}) = \sum_{\alpha \leqa \beta}  (-1)^{\ell(\alpha) - \ell(\beta)} \prod_{i = 1}^{\ell(\beta)}x_i(x_i - x_1)^{\beta_i - 1}, 
\end{equation}
\end{Lem}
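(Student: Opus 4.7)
The plan is to prove the identity by comparing coefficients of monomials on both sides, exploiting the natural block decomposition induced by the parts of $\alpha$. Both sides are homogeneous polynomials of total degree $|\alpha|$ in the variables $x_1, x_2, \ldots$, so it suffices to verify that the coefficient of each monomial ${\bf x}^\mu = x_1^{\mu_1} x_2^{\mu_2} \cdots x_s^{\mu_s}$ (with $\mu_j \geq 1$ for $j \geq 2$) agrees on both sides.

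I would first expand both sides explicitly. On the LHS, substituting the definition $P_\beta(x_1, \ldots, x_{|\beta|}) = \sum_{\gamma \geqa \beta}(-1)^{|\beta| - \ell(\gamma)}{\bf x}^\gamma$ produces a double sum over pairs $(\beta, \gamma)$ with $\beta \leqc \alpha$ and $\gamma \geqa \beta$; for the target monomial, one must have $\gamma_j = \mu_j$ for $j \geq 2$ and $|\alpha|-|\beta|+\gamma_1 = \mu_1$. On the RHS, observe that the factor $x_1(x_1-x_1)^{\beta_1-1}$ is identically zero unless $\beta_1 = 1$, so only refinements $\beta$ of $\alpha$ with $\beta_1 = 1$ contribute; expanding $\prod_{i \geq 2} x_i(x_i-x_1)^{\beta_i-1}$ via the binomial theorem then introduces an auxiliary composition $\delta \leqc \beta$ with $\delta_1 = 1$. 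For a fixed monomial ${\bf x}^\mu$, both sides organize into a product of $\ell(\alpha)$ independent block contributions, one for each part of $\alpha$, because a refinement of $\alpha$ is itself a concatenation of refinements of the individual $\alpha_i$.

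Within each block, the sum over the corresponding part of $\beta$ (and, on the LHS, the contribution of $\gamma$ to that block) collapses to a single binomial coefficient via the alternating partial sum identity
\begin{equation*}
    \sum_{m=0}^{N} (-1)^m \binom{n}{m} = (-1)^N\binom{n-1}{N}.
\end{equation*}
I would first verify the base case $\ell(\alpha) = 1$, namely $\alpha = (k)$, to pin down the sign conventions---there the block decomposition is trivial and the identity above applies directly, yielding that the coefficients on both sides agree. Multiplying together the per-block equalities for general $\alpha$ then yields the full identity. The main obstacle is the bookkeeping of the $x_1$-exponent, which on the LHS receives contributions from both the external factor $x_1^{|\alpha|-|\beta|}$ and the first entry of $\gamma$, while on the RHS it arises from the forced factor $x_1$ (when $\beta_1 = 1$) together with each $(-x_1)^{\beta_i - \delta_i}$ in the expansion; aligning these two sources, and confirming that the per-block signs combine correctly, is the delicate step.
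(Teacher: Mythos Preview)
Your plan mirrors the paper's proof closely: both compare coefficients of a fixed monomial ${\bf x}^\mu$, both reduce the $\ell(\alpha)=1$ case to the alternating partial-sum identity $\sum_{m=0}^N(-1)^m\binom{n}{m}=(-1)^N\binom{n-1}{N}$, and both then extend to general $\alpha$ using that a refinement of $\alpha$ is a concatenation of refinements of its parts. The paper packages the extension as an induction on $\ell(\alpha)$, peeling off the last part $\alpha_{k+1}$ and invoking the hypothesis on the prefix $\alpha'=(\alpha_1,\ldots,\alpha_k)$, with generating-function manipulations to match coefficients.

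Your phrasing of the extension as ``a product of $\ell(\alpha)$ independent block contributions'' is not quite right, however, and the obstruction is more than the $x_1$-bookkeeping you flag. The variable indices used in the $j$th block are $x_{M_{j-1}+1},\ldots,x_{M_j}$, where $M_j=m_1+\cdots+m_j$ depends on the refinement lengths chosen in \emph{all} previous blocks; hence for a fixed $\mu$ the coefficient is a \emph{sum} over ways to partition the index set $\{1,\ldots,\ell(\mu)\}$ into consecutive blocks, of a product of per-block contributions --- a convolution, not a bare product. Moreover, the per-block identity needed for blocks $j\geq 2$ is a strict generalization of your base case: there the block's first variable is $x_{M_{j-1}+1}\neq x_1$, yet the right-hand side still subtracts $x_1$, so the $\beta_1=1$ forcing you rely on no longer occurs. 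The paper's induction handles both of these issues at once. Your route can be salvaged by first proving the one-block identity with a distinguished parameter $z$ in place of $x_1$ and then summing the resulting products over block-size distributions, but at that point you have essentially reconstructed the inductive argument.
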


The proof of Lemma~\ref{lem::helper_lemma_to_main_result} is elementary (algebraic) yet technical; to maintain readability and flow we include its proof only later in Appendix~\ref{sect::appendixA}.

\section{$f$-polynomials and generating functions for $CRY_n$}

\label{sect::formulas_for_CRY_n}

Given the significance of $CRY_n$ in the research community, we dedicate this section to the explicit formulas for the $f$-polynomials of $CRY_n$ obtained by specializing the results in the previous section. We first obtain the following result by setting ${\bf a} = (1,0, \ldots, 0)$ in Lemma~\ref{lem::primitive_f_vec_as_quasi_sym}. We remark that it is a generalization of \cite[Prop. 3.2]{andresen_kjeldsen_1976} which one can recover by setting $x = 1$.

\begin{Cor}
    Let $\widetilde{f}^{(n)}(x)$ be the primitive $f$-polynomial of $CRY_n$. Then for all $n \geq 1$:
    \begin{equation}
        \widetilde{f}^{(n)}(x) = \frac{1}{x^n}\sum_{m = 0}^{n-1}(-1)^m\pi_{n-m}(x)\cdot h_m(x, (x+1)^2 - 1, \ldots , (x+1)^{n-m} -1).
    \end{equation} 
\end{Cor}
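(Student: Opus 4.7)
The plan is to specialize Lemma~\ref{lem::primitive_f_vec_as_quasi_sym} directly. First, I would compute $\revcomp(1,0,\ldots,0)$: reading the vector from right to left, the $n-1$ trailing zeros together with the leading $1$ form a single block, so $\revcomp(1,0,\ldots,0) = (n)$, the one-part composition of $n$. Plugging this into Lemma~\ref{lem::primitive_f_vec_as_quasi_sym} immediately gives
\[
\widetilde{f}^{(n)}(x) \;=\; \frac{1}{x^n}\,P_{(n)}\bigl(x,\,(x+1)^2-1,\,\ldots,\,(x+1)^n-1\bigr),
\]
so the task reduces to recognizing the right-hand side as the claimed sum.

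Next, I would expand $P_{(n)}$. Since every composition of $n$ refines the one-part composition $(n)$, the definition in Equation~\eqref{eq::polynomial_P} becomes a sum over all $\beta \composition n$. I would group terms by $k := \ell(\beta)$ and set $m = n - k$, so that $m$ ranges over $\{0, 1, \ldots, n-1\}$. Substituting $\beta_i = \gamma_i + 1$ identifies the length-$k$ compositions of $n$ with weak compositions $\gamma \in \NN^k$ satisfying $|\gamma| = m$. Writing $y_i := (x+1)^i - 1$ for the specialized variables (so $y_1 = x$), this yields the factorization
\[
\sum_{\substack{\beta\, \composition\, n \\ \ell(\beta) = n-m}} y_1^{\beta_1}\cdots y_{n-m}^{\beta_{n-m}} \;=\; \Bigl(\prod_{i=1}^{n-m} y_i\Bigr)\sum_{\substack{\gamma \in \NN^{n-m} \\ |\gamma| = m}} y_1^{\gamma_1}\cdots y_{n-m}^{\gamma_{n-m}}.
\]

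The final step is to identify these two factors. The product $\prod_{i=1}^{n-m}((x+1)^i - 1)$ is exactly $\pi_{n-m}(x)$ by the definition given in Theorem~\ref{thm::main_result}, while $\sum_{\gamma \in \NN^{n-m},\ |\gamma|=m} y_1^{\gamma_1}\cdots y_{n-m}^{\gamma_{n-m}}$ is the standard weak-composition expansion of the complete homogeneous polynomial $h_m(y_1,\ldots,y_{n-m})$. Combining these with the sign $(-1)^{n-k} = (-1)^m$ and the overall factor $x^{-n}$ produces the stated identity.

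No step here should be an obstacle: the corollary is essentially an unpacking of Lemma~\ref{lem::primitive_f_vec_as_quasi_sym} once one observes that $\revcomp(1,0,\ldots,0)$ is the coarsest composition, after which the calculation reduces to reindexing weak compositions and recognizing the resulting sum as $h_m$. The only care required is in tracking signs and the range of $m$, which one can sanity-check at the boundary cases $m=0$ (contributing $\pi_n(x)$, corresponding to the single coarsest refinement of $(n)$) and $m=n-1$ (contributing $(-1)^{n-1}x^n$, corresponding to the all-ones composition $(1^n)$).
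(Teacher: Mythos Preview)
Your approach is correct and matches the paper's proof exactly: specialize Lemma~\ref{lem::primitive_f_vec_as_quasi_sym} at $\alpha = \revcomp(1,0,\ldots,0) = (n)$, group the resulting sum over all compositions of $n$ by length $n-m$, and factor each level as $\pi_{n-m}(x)\cdot h_m$. One minor slip in your parenthetical sanity check: the descriptions of the boundary cases are swapped---$m=0$ corresponds to the \emph{finest} composition $(1^n)$ (not the coarsest) and $m=n-1$ to $\beta = (n)$ (not $(1^n)$)---though the contributions you list, $\pi_n(x)$ and $(-1)^{n-1}x^n$, are correct for those respective values of $m$.
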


\begin{proof}
    In the case of $CRY_n$, ${\bf a} = (1, 0, \ldots, 0)$, hence $\revcomp({\bf a})$ is the composition $(n)$. Hence $P_{\alpha}(x_1, \ldots, x_n)$ in Lemma~\ref{lem::primitive_f_vec_as_quasi_sym} has a term for every integer composition of $n$. Factoring out $x_1 \cdots x_{n-m}$ from the terms coming from level $n-m$ in the poset of compositions by reverse refinement leaves $h_{m}(x_1, \ldots, x_{n-m})$. We then evaluate each $x_i$ in the same way as Lemma~\ref{lem::primitive_f_vec_as_quasi_sym}.
\end{proof}

We omit the proof of the following corollary as it follows similarly, except by specializing to ${\bf a} = (1,0, \ldots ,0)$ in Theorem~\ref{thm::main_result} instead of Lemma~\ref{lem::primitive_f_vec_as_quasi_sym}.

\CorSpecializeCRY

The following result is a specialization of Lemma~\ref{lem::generalized_psi_xi_relationship} to the case of ${\bf a} = (1, 0, \ldots, 0)$ and further gives \cite[eq. (1)]{andresen_kjeldsen_1976} by summing over all $d$:
\begin{Cor}
\label{cor::CRY_binomial_f_vec_f_primitive_vec_relationship} Entries of the $f$-vector and primitive $f$-vector of $CRY_n$ satisfy: $f^{(n)}_d  = \sum_{i = 0}^{n-1}\binom{n-1}{i}\widetilde{f}^{(n - i)}_d.$
\end{Cor}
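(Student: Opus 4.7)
The plan is to specialize Lemma~\ref{lem::generalized_psi_xi_relationship} directly to the case ${\bf a} = (1,0,\ldots,0)$ of length $n$ and then extract the coefficient of $x^d$ on both sides.

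First I would enumerate the vectors ${\bf b} \leqc {\bf a}$. Since $\leqc$ is the partial order on $0$-$1$ vectors that deletes $0$-entries only (see Remark~\ref{rem::composition_posets_as_bitstrings}), and ${\bf a}$ has a single $1$ in the first position followed by $n-1$ zeros, any ${\bf b} \leqc {\bf a}$ has the form $(1,0,\ldots,0)$ of some length $n-i$, where $0 \le i \le n-1$ is the number of zeros deleted. The multiplicity $k_{{\bf a},{\bf b}}$, which counts the number of ways to delete zeros from ${\bf a}$ to produce ${\bf b}$, is therefore $\binom{n-1}{i}$, since we choose which $i$ of the $n-1$ zeros to delete.

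Substituting into Lemma~\ref{lem::generalized_psi_xi_relationship} gives
\begin{equation*}
    f^{(n)}((1,0,\ldots,0);x) \;=\; \frac{1}{x} \;+\; \sum_{i=0}^{n-1} \binom{n-1}{i}\, \widetilde{f}^{(n-i)}((1,0,\ldots,0);x).
\end{equation*}
Extracting the coefficient of $x^d$ for $d \geq 0$, the $\frac{1}{x}$ term contributes nothing (it corresponds to the empty face, which is not recorded for $d \geq 0$), and we obtain the claimed identity
\begin{equation*}
    f^{(n)}_d \;=\; \sum_{i=0}^{n-1} \binom{n-1}{i}\, \widetilde{f}^{(n-i)}_d.
\end{equation*}

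There is no substantive obstacle here: the work lies entirely in Lemma~\ref{lem::generalized_psi_xi_relationship}, and the specialization is purely bookkeeping once one observes that the poset of ${\bf b} \leqc (1,0,\ldots,0)$ is a chain indexed by length with binomial multiplicities. The only minor subtlety is ensuring that the $\frac{1}{x}$ (empty-face) term is correctly omitted when passing to the $d$-th coefficient for $d \geq 0$.
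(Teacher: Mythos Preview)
Your proposal is correct and follows essentially the same approach as the paper's proof: both specialize Lemma~\ref{lem::generalized_psi_xi_relationship} to ${\bf a}=(1,0,\ldots,0)$ and observe that the ${\bf b}\leqc{\bf a}$ are exactly the vectors $(1,0,\ldots,0)$ of length $n-i$ with multiplicity $\binom{n-1}{i}$. Your version is simply more explicit about the bookkeeping (enumerating the ${\bf b}$'s, computing $k_{{\bf a},{\bf b}}$, and handling the $\frac{1}{x}$ term) than the paper's two-sentence proof.
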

\begin{proof}
    By Lemma~\ref{lem::generalized_psi_xi_relationship}, we can obtain $f^{(n)}({\bf a}; x)$ from $\widetilde{f}^{(n)}({\bf a}; x)$ by summing over all subsets of zeros in ${\bf a}$. For $CRY_n$, ${\bf a} = (1, 0, \ldots, 0)$, so all possible subsets of $0$'s occur. 
\end{proof}

Next, we present an intriguing relationship between $f^{(n)}$ and $\widetilde{f}^{(n)}$ described in the following Theorem.

\fVectorPrimitiveRelationship

\begin{proof}
        The proof is analogous to that of \cite[Prop. 4.1]{andresen_kjeldsen_1976}.  We have:
\begin{align*}
    \begin{split}
         \frac{(1+x)^{n}}{x}\widetilde{f}^{(n-1)} - f^{(n}(x) =  \frac{(1+x)^{n}}{x}\cdot \frac{1}{x^{n-1}} \sum_{m = 0}^{n-2} (-1)^m \pi_{n-m-1}(x)h_m((x+1)^1 - 1, \ldots , (x+1)^{n-m-1} - 1) ) \\ - \frac{1}{x} - \frac{1}{x^n} \sum_{m = 0}^{n-2} (-1)^m (1 + x)^m \pi_{n-m}(x)h_m((x+1)^1 - 1, \ldots, (x+1)^{n-m-1} - 1).
    \end{split}
\end{align*}
Which after algebraic manipulations simplifies to:
\begin{equation*}
     \frac{(1+x)^{n}}{x}\widetilde{f}^{(n-1)} - f^{(n}(x) = - \frac{1}{x} + \frac{1}{x^n}\sum_{m = 0}^{n-2} \pi_{n-m-1}(x) h_m(-(x+1)^2 + (x+1), \ldots, -(x+1)^{n-m} + (x+1))
\end{equation*}
 We may use the path model for the complete homogeneous symmetric functions to rewrite this expression as:
    \begin{equation}
    \label {eq::proof_of_lem_4.8}
        \frac{(1+x)^{n}}{x}\widetilde{f}^{(n-1)} - f^{(n}(x) = -\frac{1}{x} + \frac{1}{x^n}\sum_{i = 0}^{\infty} (N(x))^{n-1}_{1,i}
    \end{equation}
    where $N(x)$ is the weighted adjacency matrix for the infinite path graph having a self loop at each vertex, with loop at vertex $i$  ($i \geq 2$) having weight $(x+1)^i - 1$ and with edge $(i, i+1)$ having weight $-(x + 1)^{i+1} + (x+1)$. In other words, $N(x)$ has the following form:
        \begin{equation*}
        N(x) := \setlength\arraycolsep{2pt}\begin{bmatrix}
            0 & x & 0  & \cdots & 0 \\
            0 & -(x+1)^2 + (x+1) & (x+1)^2 - 1  & \cdots & 0 & \cdots\\
              \vdots & \vdots & \vdots  & \ddots & 0 & \cdots\\
               0 & 0 & 0  & -(x+1)^i + (x+1) & (x+1)^i - 1 & \cdots \\
                \vdots & \vdots & \vdots  & \vdots & \ddots & \cdots\\
        \end{bmatrix}.
    \end{equation*}
    A simple induction shows that $\sum_{i = 0}^{\infty} (N(x))^{n-1}_{1,i} = x^{n-1}$, and after plugging into Equation~\eqref{eq::proof_of_lem_4.8}, gives the result.
\end{proof}

We conclude this section with a result that packages all of the face numbers of $CRY_n$ for varying $n$ and varying dimension $d$ into a single generating function.

\CorGeneratingFunction

\begin{proof}
The result may be established either by using Corollary~\ref{cor::CRY_f_vec_formula} and the generating function for the complete homogeneous symmetric functions (c.f. \cite{EC2}), or from the multivariate generating function of Fishburn matrices due to Jel\'inek, Equation~\eqref{eq::Jelinek_generating_function} (\cite[Thm. 2.1]{Jelinek_2011}), together with Theorem~\ref{thm::f_vector_product_result}. Preferring its simplicity, we present the latter. 

    Using the relationship established between primitive  subgraphs of $K_{n+1}$ and primitive Fishburn matrices (see Figure~\ref{fig::Fishburn_matrix_example}), the generating function for primitive $f$-vectors of $CRY_n$ can be seen to be
    \begin{equation}
        \sum_{n = 0}^{\infty} \widetilde{f}_n(x) t^n = G(t,x,x,x,x),
    \end{equation}
    where $G$ is the generating function of non-empty primitive interval orders as defined in \cite{Jelinek_2011}. Using Equation~\eqref{eq::Jelinek_generating_function} (Theorem 2.1 of \cite{Jelinek_2011}) we compute:
    $$G(t,x,x,x,x) = \sum_{n \geq 0} t^{n+1} \prod_{i = 0}^{n}\frac{(x+1)^{i+1} - 1}{1 + t[(x+1)^{i+1} - 1]}.$$
    Finally, by Theorem~\ref{thm::f_vector_product_result} it follows that:
    \begin{align*}
        \sum_{i = 0}^{\infty}f_n(x)t^n &= \frac{1}{x}G((1+x)t, x,x,x,x) \\
        &= \frac{1}{x}\sum_{n = 0 }^{\infty}t^{n+1}(1+x)^{n+1}\prod_{i=0}^{n}\frac{(x+1)^{i+1} - 1}{1 + t[(x+1)^{i+2} - (x+1)]}\\
        &= F(t,x)
    \end{align*}
    as claimed.
\end{proof}

 \section{Direct enumerative results and recurrences}

 \label{sect::direct_enumerative_results_and_recurrences}

In this section, we return to the original problem of enumerating faces of $\Flow_n({\bf a})$ and see how far direct counting techniques take us. In particular, for a few special cases we find formulas for face numbers $f_d^{(n)}({\bf a})$ that are more computationally efficient than computing the full $f$-vector as given by Theorem~\ref{thm::main_result}.

To begin, it was known to Chan--Robbins--Yuen \cite{Chan_Robbins_Yuen_2000} that their eponymous polytope has $2^{n-1}$ vertices. We provide a brief alternate proof of this result here to demonstrate how Theorem~\ref{thm::Hille} can be used to enumerate faces of flow polytopes. The rest of the results in this section arise similarly.

 \begin{Prop}[\cite{Chan_Robbins_Yuen_2000}]
 Let $f^{(n)} = (f^{(n)}_{d})_{d \geq -1}$ be the $f$-vector of $CRY_n$. Then $f^{(n)}_0 = 2^{n-1}.$
 \end{Prop}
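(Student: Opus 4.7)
The plan is to apply Hille's theorem (Theorem~\ref{thm::Hille}) with $d = 0$: vertices of $CRY_n = \Flow_n(1, 0, \ldots, 0)$ correspond bijectively to ${\bf a}$-valid subgraphs $H \subseteq K_{n+1}$ with first Betti number $\beta_1(H) = 0$. Since $\beta_1(H) = |E(H)| - |V(H)| + c(H)$, having $\beta_1(H) = 0$ is equivalent to $H$ being a forest. So the task reduces to counting ${\bf a}$-valid forests in $K_{n+1}$.

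Next, I would show that every such $H$ is precisely a single directed path from $v_1$ to $v_{n+1}$. Indeed, for ${\bf a}' = (1, 0, \ldots, 0, -1)$ the underlying flow has a single source $v_1$ of netflow $+1$, a single sink $v_{n+1}$ of netflow $-1$, and conservation at all intermediate vertices; moreover every edge of $H$ carries strictly positive flow by definition of the support. In a forest carrying such a flow, the unique $v_1$-to-$v_{n+1}$ path in the tree component containing $v_1$ already realizes the full flow, and any additional edge would either create a cycle (contradicting that $H$ is a forest) or force a nonzero imbalance at some vertex other than $v_1, v_{n+1}$ (contradicting conservation). Hence $H$ is exactly one directed path from $v_1$ to $v_{n+1}$.

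Finally, I would enumerate such paths in the transitively directed $K_{n+1}$. An edge is present only from $v_i$ to $v_j$ with $i < j$, so a directed path from $v_1$ to $v_{n+1}$ is determined by its (automatically increasing) sequence of intermediate vertices, namely an arbitrary subset of $\{v_2, \ldots, v_n\}$. There are exactly $2^{n-1}$ such subsets, yielding $f^{(n)}_0 = 2^{n-1}$. There is no real obstacle here; the only subtlety is translating ``${\bf a}$-valid forest'' into ``single directed $v_1$-to-$v_{n+1}$ path,'' which follows immediately from the source/sink structure of ${\bf a}$ together with the forest condition.
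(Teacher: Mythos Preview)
Your proposal is correct and follows essentially the same approach as the paper: apply Theorem~\ref{thm::Hille} at $d=0$, identify the ${\bf a}$-valid subgraphs of first Betti number $0$ as exactly the directed paths from $v_1$ to $v_{n+1}$, and then count these paths. Your final enumeration (via the subset of intermediate vertices $\{v_2,\ldots,v_n\}$) is in fact a cleaner phrasing of the same $2^{n-1}$ count the paper gives.
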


 \begin{proof}
 Vertices of $CRY_n$ are $0$-dimensional faces, hence by Theorem~\ref{thm::Hille} we need to find $(1,0,\ldots, 0, -1)$-valid graphs of $K_{n+1}$ that have first Betti number equal to $0$. These are exactly paths from $v_1$ to $v_{n+1}$, and each such path is completely determined by the support of edges it uses from the set $\set{(v_1, v_2), (v_2, v_3), \ldots, (v_{n}, v_{n+1})}$. Hence the vertices of $CRY_n$ are in bijection with words of length $n-1$ in the alphabet $\set{0,1}$, which gives the result.
 \end{proof}

 The next result is proved similarly, except that instead of counting paths we count graphs having first Betti number $1$ and a directed path from $v_1$ to $v_{n+1}$. 

 \begin{Prop}
 \label{prop::Chan_Robbins_Yuen_original_number_vertices}
 Let $f^{(n)} = (f^{(n)}_{d})_{d \geq -1}$ be the $f$-vector of $CRY_n$. Then the number $f^{(n)}_1$ of edges have closed form expression:
 \begin{equation}
 \label{eq::closed_form_dim_1_faces}
     f^{(n)}_1 = 2\cdot 3^{n-1} - (n+3)\cdot 2^{n-2}.
 \end{equation}
 \end{Prop}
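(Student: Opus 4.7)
The approach closely mirrors the vertex count just given: apply Theorem~\ref{thm::Hille} to replace the $1$-dimensional faces of $CRY_n$ by subgraphs $H \subseteq K_{n+1}$ that are $(1,0,\ldots,0)$-valid and satisfy $\beta_1(H)=1$, then enumerate these combinatorially.

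The first step is to extract a rigid structural description of such $H$. Since every vertex lies on a $v_1\to v_{n+1}$ path, $H$ is connected, so $\beta_1(H)=1$ forces $|E(H)|=|V(H)|$. A simple degree count (using that each $v\neq v_1$ has in-degree $\geq 1$ and each $v\neq v_{n+1}$ has out-degree $\geq 1$, and that the sum of in-degrees equals $|V(H)|$) then shows that exactly one vertex $v_b\neq v_1$ has in-degree $2$ and the rest have in-degree $1$, and symmetrically exactly one $v_a\neq v_{n+1}$ has out-degree $2$ and the rest have out-degree $1$, necessarily with $a<b$ because $K_{n+1}$ is transitively oriented. Consequently, $H$ must decompose as a directed path from $v_1$ to $v_a$, a ``bubble'' consisting of two internally vertex-disjoint directed paths from $v_a$ to $v_b$, and a directed path from $v_b$ to $v_{n+1}$.

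Next, I would count these three factors independently. The number of directed paths from $v_i$ to $v_j$ (with $i<j$) in $K_{n+1}$ is $2^{j-i-1}$, so the left and right paths each have length-generating function $\widehat L(x)=(1-x)/(1-2x)$. The number of unordered pairs of distinct internally vertex-disjoint paths from $v_a$ to $v_b$ is $\tfrac{1}{2}(3^{b-a-1}-1)$, since each of the $b-a-1$ intermediate vertices belongs to path~$1$, path~$2$, or neither, minus the unique degenerate ordered pair of two copies of the edge $(v_a,v_b)$, then divided by $2$; this produces the bubble generating function $B(x)=x^2/\bigl((1-3x)(1-x)\bigr)$. Multiplying and summing over $a$ and $b$ gives
\[
f^{(n)}_1 \;=\; [x^n]\,\widehat L(x)^2\,B(x) \;=\; [x^{n-2}]\,\frac{1-x}{(1-2x)^2(1-3x)}.
\]

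The final step is a routine partial-fraction expansion,
\[
\frac{1-x}{(1-2x)^2(1-3x)} \;=\; \frac{6}{1-3x}-\frac{1}{(1-2x)^2}-\frac{4}{1-2x},
\]
from which one reads off $[x^{n-2}](\,\cdot\,)=6\cdot 3^{n-2}-(n-1)\,2^{n-2}-4\cdot 2^{n-2}=2\cdot 3^{n-1}-(n+3)\,2^{n-2}$, matching Equation~\eqref{eq::closed_form_dim_1_faces}. The only conceptually delicate step is the degree-count in the second paragraph forcing the ``path + bubble + path'' skeleton; once this is in hand, all that remains is mechanical bookkeeping.
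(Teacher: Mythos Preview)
Your proof is correct, and it takes a genuinely different route from the paper's.

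Both arguments begin from the same structural seed---via Theorem~\ref{thm::Hille}, an edge of $CRY_n$ is a $(1,0,\ldots,0)$-valid subgraph with $\beta_1=1$, and such a graph has a unique vertex of out-degree~$2$. From there the two proofs diverge. The paper conditions on the location of that vertex: if it is some $v_i$ with $i\ge 2$, then deleting the unique out-edge of $v_1$ reduces to a smaller instance, while if it is $v_1$ itself, a direct count by words in $\{0,1,2\}$ gives $3^{n-1}-2^{n-1}$. This yields the recurrence $f^{(n)}_1=\sum_{j<n}f^{(j)}_1+3^{n-1}-2^{n-1}$, which is then solved. You instead push the degree analysis one step further to pin down the full ``path--bubble--path'' skeleton, and then enumerate these decompositions directly with a product of generating functions and a partial-fraction extraction.

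What each approach buys: the paper's recurrence is conceptually lightweight (no bubble count, no partial fractions) but leaves the closed form to a separate recurrence-solving step; your approach front-loads more structure but produces the closed form in one shot and makes the $3^{n-1}$ and $(n+3)2^{n-2}$ terms visibly emerge from the poles at $1/3$ and the double pole at $1/2$. Your degree-count argument forcing $a<b$ and internal vertex-disjointness is the only place requiring care, and it checks out.
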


 \begin{proof}
    First we will prove that the numbers   $f^{(n)}_1$ satisfy the recurrence relation:
 \begin{equation}
     f^{(n)}_1 = f^{(n-1)}_1 + f^{(n-2)}_1 + \ldots + f^{(1)}_1 + 3^{n-1} - 2^{n-1}.
 \end{equation}
 
     Let $H$ be a subgraph of $K_{n+1}$ having first Betti number $1$ and a directed path from $v_1$ to $v_{n+1}$. Each such graph must have exactly one vertex of out-degree $2$. If this vertex is $v_i$ for $i \geq 2$, then $H$ is counted by $f^{(1)}_j$ for some $j$ (specifically the index $j$ is the lowest natural number such that $v_j$ is adjacent to $v_1$). Otherwise, it must be the case that the out-degree of $v_1$ is $2$. The number of such subgraphs which are $(1,0, \ldots, 0)$-valid are indexed by words in $0,1$ and $2$; the letter in position $i$ represents how many paths from $v_1$ to $v_n$ intersect in the interval between $v_{i+1}$ and $v_{i+2}$. As there are $3^{n-1}$ words,  there are this many valid subgraphs having out-degree of $v_1$ being 2. However, any word using only the letters $0$ and $2$ represents a graph where both paths used to make the word intersect at every edge;  in other words, such a word corresponds to a subgraph having first Betti number equal to 0. Hence there are $3^{n-1} - 2^{n-1}$ valid subgraphs $H$ that have $\beta_1(H) = 1$ and such that the out-degree of $v_1 = 2$.

      Next, if we let $S(n) := f_{1}^{(n)} + \ldots + f_1^{(1)}$, then the recurrence above gives us the relation $S(n) - S(n-1) = S(n-1) + 3^{n-1} - 2^{n-1}$. Solving the recurrence gives the closed form expression Equation~\eqref{eq::closed_form_dim_1_faces}.
 \end{proof}

The direct counting techniques of this section may also be used to provide a proof of a conjecture of Morales \cite{Morales_private_communication_2024} for the number of vertices of $\Flow_n({\bf a})$.

\begin{Prop}
\label{prop::number_of_vertices}
For ${\bf a}=(a_1,a_2,\ldots,a_n,-\sum_i a_i)$, with $a_i \geq 0$ and signature $sgn({\bf a})=(c_1,c_2,\ldots,c_k)$ then the number $v({\bf a})$ of vertices of $\Flow_n({\bf a})$ equals

\begin{equation}
\label{eq::number_of_vertices}
v({\bf a}) = k^{c_k - 1}\prod_{i =  1}^{k-1} (i+1)^{c_i}.
\end{equation}
\end{Prop}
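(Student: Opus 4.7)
The plan is to apply Hille's theorem (Theorem~\ref{thm::Hille}) to identify the vertices of $\Flow_n({\bf a})$ with the ${\bf a}$-valid subgraphs of $K_{n+1}$ having first Betti number zero, namely the ${\bf a}$-valid forests. Since every source $v_{p_i}$ (with $a_{p_i}>0$) must route its unit of flow to the sink $v_{n+1}$, the non-trivial component of any such forest is a single tree $T$ containing all sources together with the sink, plus possibly some isolated non-source non-sink vertices (which contribute no edges). Viewing $T$ as rooted at $v_{n+1}$ with each non-sink vertex having a uniquely determined higher-indexed parent, positivity of the induced flow on every edge of $T$ is equivalent to the combinatorial condition that every non-root leaf of $T$ is a source. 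The problem therefore reduces to counting rooted trees $T \subseteq K_{n+1}$ that contain every source and the sink and whose non-root leaves are all sources.

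To carry out this count, I would use the block structure encoded by $sgn({\bf a}) = (c_1, \ldots, c_k)$, in which block $i$ consists of the source $v_{p_i}$ followed by $c_i - 1$ non-source intermediates. The multiplicative form of the formula suggests that the choices at each vertex decouple across blocks: for each vertex in block $i$ with $i < k$, an appropriate labeling supplies $i+1$ independent options, while in block $k$ a boundary correction accounts for the fact that the final non-sink vertex $v_n$ is forced to attach to the sink $v_{n+1}$, producing the exponent $c_k - 1$. Alternatively, I would proceed by induction on $k$: the base case $k=1$ is the CRY case ${\bf a} = (1,0,\ldots,0)$, whose $2^{c_1 - 1}$ vertices correspond bijectively to paths from $v_1$ to $v_{n+1}$ obtained by choosing any subset of intermediates; the inductive step would show that extending the signature --- either by appending a new source-block of size $1$ at the end (a new source just before the sink) or by inserting a new intermediate into an existing block --- multiplies the vertex count by the appropriate factor dictated by the formula.

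The main obstacle will be verifying the multiplicativity, or equivalently the independence of the per-vertex choices, cleanly. The subtle point is that a non-source intermediate appears in $T$ only when it lies on some source-to-sink path, so its very inclusion is contingent on the other choices in the tree; the bookkeeping required to isolate a single local datum per vertex --- and in particular to identify the boundary correction responsible for the $c_k - 1$ exponent attached to the last block --- is the crux of the combinatorial argument, and is where any attempted induction or bijection will need to be set up with care.
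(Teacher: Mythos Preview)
Your reduction via Theorem~\ref{thm::Hille} is correct: vertices of $\Flow_n({\bf a})$ correspond to ${\bf a}$-valid subgraphs with first Betti number $0$, and since each such subgraph is the support of a flow, every non-sink vertex in the support has out-degree exactly one. Your characterization of these as trees (rooted at $v_{n+1}$, with all non-root leaves being sources) together with isolated vertices is accurate, and matches the paper's viewpoint.

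The gap is precisely the one you flag yourself: you have not produced the local statistic that makes the choices at the $n-1$ vertices $v_2,\ldots,v_n$ genuinely independent, and neither of your two sketched routes (a vague ``appropriate labeling'' or an induction on $k$) is carried far enough to see that it closes. The paper resolves this with a single clean bijection. For a vertex $f$ of the polytope and each $i\in\{2,\ldots,n\}$, let $v_{i'}$ be the unique target of the out-edge of $v_i$ in the supporting tree, and set $s_{i-1}$ equal to the total flow entering the interval $[v_i,v_{i'-1}]$ from edges (i.e.\ excluding the netflow contributions $a_j$). If the index $i$ lies in block $j$ of the signature, then exactly $j$ sources sit at or to the left of $v_i$, so $s_{i-1}\in\{0,1,\ldots,j\}$ when $j<k$, giving $j+1$ options; in the last block the obvious boundary effect drops one coordinate, yielding the exponent $c_k-1$. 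The map $f\mapsto(s_1,\ldots,s_{n-1})$ is the bijection onto the product set of the advertised cardinality.

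This is exactly the ``per-vertex decoupling'' you were hoping for, and it sidesteps the worry you raise about whether an intermediate vertex is included in the tree: the statistic $s_{i-1}$ is defined via the flow at $v_i$ regardless of whether $v_i$ carries positive throughput, so the contingency you identify never has to be tracked separately. Your inductive alternative would likely also succeed, but the bookkeeping for inserting a new intermediate into an existing block is noticeably heavier than the one-line statistic above.
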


\begin{proof}

Let ${\bf a}$ be fixed, and let $(c_1,c_2,\ldots,c_k)$ be the signature of ${\bf a}$. We will denote by $V_{{\bf a}}$ the set of vertices of $\Flow_n({\bf a})$ and by $S_{(c_1, \ldots, c_k)}$ the set of tuples of integers ${\bf s} = (s_1, \ldots, s_{n-1})$ where $0 \leq s_i \leq k$ if $s_i$ is in the $k$th \emph{block} determined by $(c_1,c_2,\ldots,c_k)$ (that is, if $c_1 + \ldots c_{k-1} < i \leq c_1 + \ldots c_{k}$).  We will set up a bijection $h: V_{{\bf a}} \rightarrow S_{(c_1, \ldots, c_k)}$ from which Equation~\eqref{eq::number_of_vertices}  then follows.

To define $h$, let $f$ be a flow on $K_{n+1}$ corresponding to a vertex of $\Flow_n({\bf a})$. Since $f$ corresponds to a vertex, the flow entering any vertex leaves along a single edge. Hence for each vertex $v_i \in \{v_2, \ldots ,v_n\}$ (excluding vertices $v_1$ and $v_{n+1}$) the interval $[v_i, v_{i' - 1}]$ is well-defined, where $v_{i'}$ is the unique vertex receiving the flow leaving $v_i$ (note that it is possible that $[v_i, v_{i' - 1}]$ may consist only of the vertex $v_i$ ). Let $s_{i-1}$ be the total incoming flow in the interval $[v_i, v_{i' - 1}]$ excluding the flow at each vertex from the netflow vector. It is easy to verify that $h(f) := (s_1, \ldots, s_{n-1})$ gives the desired bijection.  
\end{proof}

\begin{Ex}
Consider the graph in Figure~\ref{fig::vertex_enumeration_example} representing a vertex of $\Flow_n((1,1,1,0))$.  The flow leaving vertex $v_2$ forms the interval $[v_2, v_3]$, which has a single unit of flow entering from the left. The flow leaving vertex $v_3$ forms the interval $[v_3, v_4]$, which has two units of flow entering from the left. Finally, vertex $v_4$ is the only vertex in its interval. There is only one unit of flow entering $v_4$ from the left. Therefore, the bijection maps the graph of Figure~\ref{fig::vertex_enumeration_example} to the tuple $(1,2,1)$.  
    \begin{figure}
        \centering
        \includegraphics[width=0.3\linewidth]{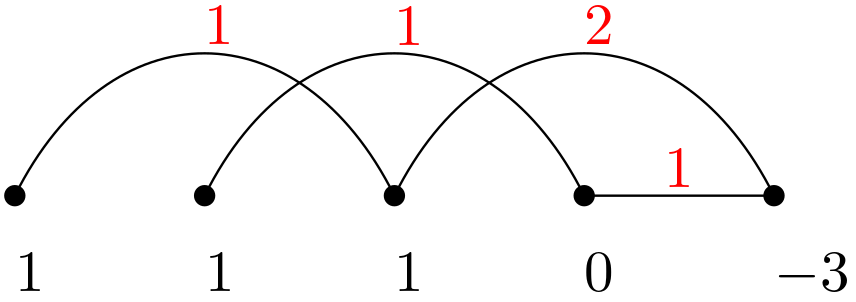}
        \caption{A graph representing a vertex of $\Flow_n((1,1,1,0))$. Under the bijection of Proposition~\ref{prop::number_of_vertices}, the graph corresponds to the tuple $(1,2,1)$.}
        \label{fig::vertex_enumeration_example}
    \end{figure}
\end{Ex}

We remark that Proposition~\ref{prop::number_of_vertices} recovers Proposition~\ref{prop::Chan_Robbins_Yuen_original_number_vertices} in the case that ${\bf a} = (1,0,\ldots, 0)$. Indeed, in this case the signature of ${\bf a}$ is just $(n)$, and so the bijection of Proposition~\ref{prop::number_of_vertices} reduces to returning the tuple of in-degrees of vertices $v_2, \ldots, v_n$.

 This method of direct enumeration stops working as well for higher dimensional faces, with the next case $d = 2$ seemingly not producing a $c$-finite sequence experimentally using SageMath \cite{sagemath, Sage-Combinat}. Nevertheless, we may use similar ideas to produce an alternate formula for the $f$-vector of $CRY_n$ to those in Section~\ref{sect::formulas_for_CRY_n} in the case of faces with low codimension. The following formulas are obtained by starting with all of $K_{n+1}$ and removing edges to lower the first Betti number, while discarding any edge deletion that results in a graph that is not $(1,0,\ldots, 0)$-valid. The benefit to this approach is that the formulas given allow us to calculate individual elements of the $f$-vector of $CRY_n$ without computing the whole $f$-vector first (c.f. Theorem~\ref{thm::main_result}).  

Let $In(v_i)$ be the set of incoming edges adjacent to vertex $v_i$ and $Out(v_i)$ be the set of outgoing edges. In similar fashion, we will denote the cardinalities of these sets as $ind(v_i) := |In(v_i)|$ and $outd(v_i) := |Out(v_i)|$.

\begin{Lem}
\label{lem::technical_lemma_on_primitive_faces}
A subgraph $H \subseteq K_{n+1}$ corresponds to a primitive face of $CRY_n$ if and only if   $In(v_i) \not \subseteq E(H)^C$ for any $i \geq 2$ and  $Out(v_i) \not \subseteq E(H)^C$ for any $i \leq n$. 
    
\end{Lem}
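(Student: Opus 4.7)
The plan is to translate the set-theoretic conditions into degree conditions and then show both directions using the standard decomposition of unit netflow into paths. First I would observe that $In(v_i)\not\subseteq E(H)^c$ is just the statement $ind_H(v_i)\geq 1$, and similarly $Out(v_i)\not\subseteq E(H)^c$ is $outd_H(v_i)\geq 1$. So the lemma becomes: $H$ corresponds to a primitive face of $CRY_n$ if and only if every $v_i$ with $i\geq 2$ has positive in-degree in $H$ and every $v_i$ with $i\leq n$ has positive out-degree in $H$. The central fact I would invoke is the standard flow-path decomposition: since the netflow vector is $(1,0,\ldots,0,-1)$, a unit flow on $K_{n+1}$ decomposes as a convex combination of directed paths from $v_1$ to $v_{n+1}$, so $H$ is $(1,0,\ldots,0)$-valid precisely when every edge (equivalently every vertex) of $H$ lies on some directed $v_1 \to v_{n+1}$ path in $H$.

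For the forward direction, suppose $H$ corresponds to a primitive face of $CRY_n$. Then by Theorem~\ref{thm::Hille} (and the definition of primitive) $H$ is $(1,0,\ldots,0)$-valid, connected, and has vertex set $\{v_1,\ldots,v_{n+1}\}$. Fix any $v_i$ with $i\geq 2$: since $v_i\in V(H)$ and $H$ is valid, $v_i$ lies on a $v_1\to v_{n+1}$ path, whose predecessor edge at $v_i$ shows $ind_H(v_i)\geq 1$. Symmetrically, for $i\leq n$ the successor edge at $v_i$ on such a path forces $outd_H(v_i)\geq 1$.

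For the reverse direction, assume the two degree conditions hold. Every vertex is then incident to some edge of $H$, so $V(H) = \{v_1,\ldots,v_{n+1}\}$. For any $v_i$ I can build a directed $v_1\to v_{n+1}$ path through $v_i$: walking forward from $v_i$ using outgoing edges produces a strictly increasing sequence of indices that must terminate at $v_{n+1}$ (the only vertex with no outgoing edge), and symmetrically walking backward produces a path from $v_1$ to $v_i$; concatenation gives the desired path. This shows every vertex lies on a $v_1\to v_{n+1}$ path, hence $H$ is $(1,0,\ldots,0)$-valid, and it also forces every vertex to lie in the connected component of $v_1$, so $H$ is connected. Therefore $H$ corresponds to a primitive face.

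I do not anticipate a serious obstacle here: both directions rest on the same two observations (path-decomposition of unit flows, and the acyclicity/transitive orientation of $K_{n+1}$ forcing greedy walks to terminate at the endpoints). The closest thing to a subtlety is the reverse direction's connectedness conclusion, but this comes for free once every vertex is shown to sit on a source-to-sink path.
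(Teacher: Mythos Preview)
Your proof is correct and essentially matches the paper's: both reduce the statement to the in/out-degree reformulation and the source-to-sink path description of $(1,0,\ldots,0)$-validity, the only structural difference being that you establish the reverse implication constructively via greedy walks while the paper argues by contrapositive. The one loose end is that validity requires every \emph{edge} of $H$ (not just every vertex) to lie on a $v_1\to v_{n+1}$ path, but your greedy-walk argument applied to an edge $(v_i,v_j)$---walking backward from $v_i$ and forward from $v_j$---closes this immediately.
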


\begin{proof}
    By Hille's result, Theorem~\ref{thm::Hille},  a subgraph $H \subseteq K_{n+1}$ corresponds to a face of $CRY_n$ if and only if it is the support of some valid flow. If $In(v_i) \subseteq E(H)^C$ for some $i \geq 2$, then in particular there is no incoming flow to vertex $v_i$, hence $H$ is either not the support of a valid flow, or is not primitive. Similarly, if $Out(v_i) \subseteq E(H)^C$ for some $i \leq n$, then there is no path from vertex $v_i$ to the sink $v_{n}$. Hence either $H$ is not the support of a valid flow, or is not primitive. 

    Conversely, suppose that $H \subseteq K_{n+1}$ is a subgraph which does  not correspond to a primitive face of $CRY_n$. If $H$ does not correspond to a face, then it must not be the support of a valid flow. The only way for this to happen is if there is some vertex $v_i$ for $i \geq 2$ that is a source or some vertex $v_j$ for $j \leq n$ such that $n$ is a sink. In the former case, we obtain $In(v_i) \subseteq E(H)^C$ and in the latter case we obtain $Out(v_j) \subseteq E(H)^C$.  Finally, we consider the case in which $H$ corresponds to a face of $CRY_n$ which is not primitive. In this case, $H$ arises as the support of some valid flow on $K_{n+1}$, however there exists some vertex $v_i$ which is not in the vertex set of $H$. In this case we have that both $In(v_i) \subseteq E(H)^C$ and $Out(v_i) \subseteq E(H)^C$ . This finishes the proof.
\end{proof}

As mentioned above, we can use this lemma to obtain an efficient formula for the face numbers of $CRY_n$ having low codimension. We will do this by relating sets of edges to a special kind of integer partition defined as follows.

\begin{Def}
    For $n \in \NN$, a \defn{bicolored integer partition of $n$ with distinct bicolored parts} is an integer partition $\lambda \vdash n$ such that the letters of $\lambda$ come in two colors, and the letters appearing in a given color are distinct from one another. We will denote the set of such partitions for all integers $n$ less than some fixed $d$ by $B_{\leq d}$:
     \begin{equation*}
        \defn{B_{\leq d}} := \{ (\mu_1, \mu_2) \, | \, \mu_1 \vdash  k_1, \mu_2 \vdash k_2,  \text{ for all $k_1 + k_2 \leq d$}, \text{ and $\mu_1$ and $\mu_2$ have distinct parts} \}.
    \end{equation*}
\end{Def}

In the following result, we let  $a_i = [x^i] \prod_{k\geq1} (1 - x^k)^2$, the number of partitions of $n$ into an even number of distinct, bicolored parts minus the number of partitions of $n$ into an odd number of distinct, bicolored parts;  see \cite[\href{https://oeis.org/A002107}{A002107}]{oeis}.

 \begin{Thm}
 \label{cor::CRY_faces_low_codim}
     For $1 \leq d \leq n-1$, the entries of the $f$-vector and primitive $f$-vector of $CRY_n$, $f_{\binom{n}{2} - d}^{(n)}$ and $\widetilde{f}_{\binom{n}{2} - d}^{(n)}$, agree and are equal to:
     $$f^{(n)}_{\binom{n}{2} - d} = \widetilde{f}^{(n)}_{\binom{n}{2} - d} =  \sum_{i = 0}^d a_i \binom{\binom{n}{2} - i}{d - i}.$$
 \end{Thm}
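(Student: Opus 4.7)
The plan is to use Hille's theorem (Theorem~\ref{thm::Hille}) together with Lemma~\ref{lem::technical_lemma_on_primitive_faces} to recast the question as counting $d$-element edge deletions from $K_{n+1}$, and then apply inclusion--exclusion whose signed generating function is precisely $\prod_{k \geq 1}(1-x^k)^2$.

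First I would establish that every face of $CRY_n$ in the codimension range under consideration is primitive, thereby giving $f^{(n)}_{\binom{n}{2}-d} = \widetilde{f}^{(n)}_{\binom{n}{2}-d}$. If $H \in \Omega_n$ has $\beta_1(H) = \binom{n}{2}-d$ and uses $|V(H)| = n+1-k$ vertices, then weak connectedness ($c(H) = 1$) gives $|E(H)| = \binom{n}{2} + n - k - d$; combining with $|E(H)| \leq \binom{n+1-k}{2}$ forces $d \geq k(2n-k-1)/2$, which for $k \geq 1$ puts $d$ outside the small-codimension regime. Hence $|V(H)| = n+1$ and $|E(H)| = \binom{n+1}{2} - d$, so these faces correspond bijectively to $d$-element subsets $D \subseteq E(K_{n+1})$ for which $K_{n+1} \setminus D$ is $(1,0,\ldots,0)$-valid.

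Next I would apply inclusion--exclusion using the forbidden configurations from Lemma~\ref{lem::technical_lemma_on_primitive_faces}: $D$ must not contain all in-edges of any $v_i$ ($i \geq 2$), nor all out-edges of any $v_j$ ($j \leq n$). Indexing these events by subsets $S \subseteq \{2, \ldots, n+1\}$ and $T \subseteq \{1, \ldots, n\}$ and writing $\mu(S,T)$ for the number of distinct edges blocked by the pair $(S,T)$, the count becomes
\begin{equation*}
\sum_{S,T}(-1)^{|S|+|T|} \binom{\binom{n+1}{2} - \mu(S,T)}{d - \mu(S,T)},
\end{equation*}
with $\mu(S,T) = \sum_{i \in S}(i-1) + \sum_{j \in T}(n+1-j) - \bigl|\{(a,b) \in T \times S : a < b\}\bigr|$, the final subtraction accounting for edges that lie in both an in-star and an out-star.

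The main obstacle is to show that this overlap term vanishes in every contributing summand. If $(a_0, b_0) \in T \times S$ with $a_0 < b_0$, then the in-edges at $v_{b_0}$ together with the out-edges at $v_{a_0}$ already contribute $(b_0-1) + (n+1-a_0) - 1 \geq n$ to $\mu(S,T)$; since the binomial $\binom{\binom{n+1}{2}-\mu}{d-\mu}$ vanishes when $\mu > d$ and $d$ lies strictly below $n$ in the stated range, no contributing term can have nonempty overlap. With the overlap eliminated, the data decouples: $S$ becomes a partition with distinct parts $\{i-1 : i \in S\} \subseteq \{1, \ldots, n\}$ and $T$ a partition with distinct parts $\{n+1-j : j \in T\} \subseteq \{1, \ldots, n\}$, while $\mu(S,T)$ is simply their combined size and $|S|+|T|$ their combined length. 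Grouping by $i = \mu(S,T)$ recognizes the signed sum as
\begin{equation*}
\sum_{(S,T):\,\mu(S,T)=i}(-1)^{|S|+|T|} = [x^i]\prod_{k \geq 1}(1-x^k)^2 = a_i,
\end{equation*}
which is the stated signed enumeration of bicolored distinct-part partitions. Substituting this back yields the claimed formula.
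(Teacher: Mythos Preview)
Your approach is essentially the paper's: both invoke Lemma~\ref{lem::technical_lemma_on_primitive_faces} to set up inclusion--exclusion over the events ``$D \supseteq In(v_i)$'' and ``$D \supseteq Out(v_j)$'', identify the contributing pairs with bicolored partitions into distinct parts, argue that the relevant in-stars and out-stars are edge-disjoint in this codimension range so that the count decouples, and then recognize the signed enumerator as $[x^i]\prod_{k\ge 1}(1-x^k)^2$. The paper computes $\widetilde f$ first and argues $f=\widetilde f$ afterwards, while you reverse the order, and the paper asserts the disjointness directly rather than through your device of showing $\mu(S,T)>d$ forces the binomial to vanish; but the substance is the same.

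One boundary caveat worth flagging: your inequality $d\ge k(2n-k-1)/2$ for a non-primitive face missing $k$ vertices gives, at $k=1$, only $d\ge n-1$, which does \emph{not} exclude the endpoint $d=n-1$ of the stated range. In fact for $d=n-1$ there genuinely are non-primitive faces of that dimension (take the complete graph on $\{v_1,\dots,v_{n+1}\}\setminus\{v_i\}$ for any $2\le i\le n$), and the paper's own Tables~\ref{tab::f_vectors_of_CRY_n} and~\ref{tab::primitive_f_vectors_of_CRY_n} show $f^{(n)}_{\binom{n}{2}-(n-1)}\neq\widetilde f^{(n)}_{\binom{n}{2}-(n-1)}$ already for $n=3,4$. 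The paper's one-line justification of $f=\widetilde f$ is no more careful at this endpoint, so this is a defect shared with the statement itself rather than with your method; both arguments are clean for $1\le d\le n-2$.
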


 \begin{proof}
     The strategy for this proof is to use inclusion-exclusion on nested collections of graphs, but in a different way than the approach of Section~\ref{sect::main_results}. The idea is to start with the entire complete graph $K_{n+1}$ and obtain subgraphs by deleting edges, while keeping track of edge deletions that lead to graphs which do not correspond to valid faces of $CRY_n$. 

     First we will find a formula for $\widetilde{f}_{\binom{n}{2} - d}^{(n)}$. For $n$ fixed and $d \leq n -1$, observe that any subgraph corresponding to a primitive codimension $d$ face of $CRY_n$ will have $\binom{n}{2} - d$ edges, as for $d$ in this range the removal of each of the $d$ edges is guaranteed to reduce the first Betti number by 1. However, some choices of edges to remove will lead to subgraphs which are not $(1, 0, \ldots , 0)$-valid.
     
     Now let $L^{(d)}_1, \ldots, L^{(d)}_d$ be collections of subgraphs of $K_{n+1}$ where $L^{(d)}_i$ contains those graphs having edge set of size $N - d$ and which do not contain the edges $Out(v_i)$. Similarly, let $R^{(d)}_{\overline{1}}, \ldots , R^{(d)}_{\overline{d}}$ each be a collection of graphs where $R^{(d)}_{\overline{i}}$ contains those graphs having edge set of size $N - d$  and which do not contain the edges $In(v_{n+1 - i})$.  
     Hence by Lemma~\ref{lem::technical_lemma_on_primitive_faces} and the discussion in the previous paragraph, we have that the number of primitive  codimension-$d$ faces of $CRY_n$ is:
     \begin{align*}
         \widetilde{f}_{\binom{n}{2} - d} &= \binom{\binom{n}{2}}{d} - |L_1 \cup \ldots \cup L_d \cup R_{\overline{1}} \cup \ldots \cup R_{\overline{d}}|. 
     \end{align*}
     We can calculate this using the principle of inclusion and exclusion:
     \begin{equation}
     \label{eq::bicolored_partitions_inclusion_exclusion}
         \widetilde{f}_{\binom{n}{2} - d} = \binom{\binom{n}{2}}{d} + \sum_{i = 1  }^d\sum (-1)^i |L_S \cap R_T|,
     \end{equation}
     where the inner sum runs over sets $S\cup T$ such that $S \subseteq \{1, \ldots d \}$, $T \subseteq \{\overline{1}, \ldots , \overline{d}\}$, and such that $\sum_{j \in S\cup T}j = i$ (where in this sum barred elements are treated as normal integers). We may associate to the pair $(L_S, R_T)$ the bicolored integer partition $(\mu_1, \mu_2)$ where the integers appearing in $\mu_1$ are determined by $S$ and the integers appearing in $\mu_2$ are determined by $T$. The integer partition is bicolored by virtue of the distinction between $L_i$ and $R_j$, and the parts of $\mu_1$ (respectively $\mu_2$) are distinct since $S$ (respectively $T$) is a set and not a multiset. 
     Moreover, we claim that $|L_S \cap R_T|$ may be completely determined in terms of $\lambda = (\mu_1, \mu_2)$. Indeed, by the requirement that $d \leq n-1$, the edges excluded from graphs in $L_S$ will be disjoint from those excluded from $R_T$, for otherwise there would exist some $L^{(d)}_i$ and $R^{(d)}_j$ for which $i + j = n$ (contradicting the fact that $d \leq n-1$). Hence the total number of graphs in $|L_S \cap R_T|$ can be found by choosing $d - |\lambda|$ edges from $\binom{n}{2} - |\lambda|$ edges (where for the bicolored partition $\lambda$  we use $|\lambda| := |\mu_1| + |\mu_2|$).
     
     Hence Equation~\eqref{eq::bicolored_partitions_inclusion_exclusion} may be rewritten as a sum over the elements of $B_{\leq d}$ as follows:
     \begin{equation*}
         \widetilde{f}_{\binom{n}{2} - d} = \binom{\binom{n}{2}}{d} + \sum_{\lambda \in B_{\leq d}} (-1)^{|\lambda|} \binom{\binom{n}{2} - |\lambda|}{d - |\lambda|}.
     \end{equation*}
     Collecting terms according to $|\lambda|$ then gives that:
     $$\widetilde{f}^{(n)}_{\binom{n}{2} - d} = \sum_{i = 0}^d a_i \binom{\binom{n}{2} - i}{d - i}.$$
     Finally, it remains to show that $\widetilde{f}^{(n)}_{\binom{n}{2} - d} = f^{(n)}_{\binom{n}{2} - d}$ for $1 \leq d \leq n-1$. This follows from the fact that for every vertex $v_i$ in $K_{n+1}$ we have $ind(v_i) + outd(v_i) = n$. Hence by removing only $d$ edges from $K_{n+1}$, every $(1,0,\ldots, 0)$-valid graph will also be primitive.

 \end{proof}

 Similar techniques can be used to prove a stronger version of Theorem~\ref{cor::CRY_faces_low_codim}; this result is forthcoming.
 
\begin{Ex}
    For $n \geq d + 1$ arbitrary, let $N := \binom{n}{2}$. Then the first few codimensions of the $f$-vector for $CRY_n$ are given by:
    \begin{align*}
        f^{(n)}_{N - 1} & = N - 2\\
        f^{(n)}_{N - 2} & = \binom{N}{2} - 2(N - 1) - 1\\
        f^{(n)}_{N - 3} & = \binom{N}{3} - 2\binom{N - 1}{2} - (N - 2) + 2\\
        f^{(n)}_{N - 4} & = \binom{N}{4} - 2\binom{N - 1}{3} - \binom{N - 2}{2} + 2\binom{N-3}{1} + 1 \\
        f^{(n)}_{N - 4} & = \binom{N}{5} - 2\binom{N - 1}{4} - \binom{N - 2}{3} + 2\binom{N-3}{2} + (N-4) + 2
    \end{align*}
\end{Ex}

\section{Final Remarks}

We conclude with notes on open problems and future work.

\subsection{Face Lattice and Primitive $f$-vectors} The present text deals with finding the $f$-polynomials of $\Flow_{n}({\bf a})$ for any $n \in \NN$ and ${\bf a} \in \NN^n$. A natural question to ask is whether or not these results provide any insights into the face lattice structure of $CRY_n$ and $\Flow_{n}({\bf a})$ more generally.  The identity shown in Equation~\eqref{eq::f_vector_primitive_f_vec_product_result} suggests that there may be some geometric meaning to the primitive $f$-polynomial and to the relationship between $f$ and $\widetilde{f}$. This is current work of the present author. 

\subsection{Combinatorial Proofs} As discussed in Section~\ref{sect::formulas_for_CRY_n}, Theorem~\ref{thm::f_vector_product_result} specializes to a result of Andresen and Kjeldsen \cite[Prop. 4.1]{andresen_kjeldsen_1976} by setting $x = 1$. In this special case, the authors of \cite{andresen_kjeldsen_1976} noted that their proof is algebraic in nature and asked for a bijective proof on the underlying sets of graphs. We remark that our identity, Theorem~\ref{thm::f_vector_product_result}, further refines their identity by providing a grading of the sets according to the first Betti number of the underlying graphs. Hence another direction of current work is to find a bijective proof of   Theorem~\ref{thm::f_vector_product_result}; doing so would simultaneously satisfy the original open problem of Andresen and Kjeldsen. We observe that the  proof strategy proposed by Andresen and Kjeldsen in \cite{andresen_kjeldsen_1976} will not suffice to prove  Theorem~\ref{thm::f_vector_product_result}, as it does not preserve first Betti number in the way required.

\subsection{Other Graphs and Netflows} There are many other open problems under the general theme of extending the results of this manuscript to the setting of other graphs. We conjecture that the approach of this paper may be modified in order to obtain the $f$-polynomials of the \textbf{partition graphs} introduced in \cite{Meszaros2017FlowPO}. It would be interesting to understand the extent to which the techniques of this paper may be used to obtain the $f$-polynomials of flow polytopes for arbitrary graphs and where they fail. Moreover, it would be of further interest to extend this work to the study of $f$-polynomials of $\Flow_n({\bf a})$ while allowing negative entries in ${\bf a}$.

\section*{Acknowledgements}

The author is indebted to Alejandro Morales for proposing this project and for many insightful conversations and feedback, including suggesting the connection to quasisymmetric-like polynomials appearing in Section~\ref{subsect::formulas_as_evaluations_of_sums_of_quasisymmetric_polynomials}. The author further wishes to thank Martha Yip and Rafael Gonz\'alez D'Le\'on for helpful conversations about flow polytopes generally, and LACIM at UQAM for a productive work environment in October 2023 and May 2024.

\printbibliography


\appendix
\section{Proof of Lemma~\ref{lem::helper_lemma_to_main_result} }\label{sect::appendixA}
\renewcommand{\thesubsection}{\Alph{subsection}}
\numberwithin{Thm}{subsection}
\numberwithin{equation}{subsection}

\begin{proof}[Proof of Lemma~\ref{lem::helper_lemma_to_main_result}]
To prove Equation~\eqref{eq::main_result_proof_multivar_identity}, we will proceed by induction on the length of $\alpha$, demonstrating at each stage that for each composition $\gamma'$, the coefficient of ${\bf x}^{\gamma'}$ on the left-hand side of Equation~\eqref{eq::main_result_proof_multivar_identity}  is the same as that on the right-hand side. 

First, let us suppose that $\alpha$ has a single part, i.e. that $\alpha = (n)$ for some $n \in \NN$, and consider some fixed but arbitrary $\mu \composition n$. Then the compositions $\beta$ appearing on the left-hand side of Equation~\eqref{eq::main_result_proof_multivar_identity} are $(1) , \ldots, (n)$, with $\mu$ appearing in $P_{(n)}, \ldots, P_{(n - \mu_1 + 1)}$. Hence the coefficient of ${\bf x}^{\mu}$ on the left-hand side of Equation~\eqref{eq::main_result_proof_multivar_identity} is $\sum_{i = 0}^{\mu_1 - 1} (-1)^{i + \mu_1 - 1}\binom{n-1}{i}$ which simplifies by a binomial identity to $(-1)^{\mu_1 - 1}\binom{n- 2}{\mu_1 - 1}$. On the other hand, since $\mu \composition n$, we have that:
\begin{align*}
    \binom{n - 2}{\mu_1 - 1} &= \binom{n - 2}{\mu_2 + \ldots +  \mu_{\ell(\mu)} - 1} \\
    &= [y^{n-2}] \frac{y^{\mu_2 + \ldots +  \mu_{\ell(\mu)} - 1}}{(1 - y)^{\mu_2 + \ldots +  \mu_{\ell(\mu)}}} = [y^{n-1}] \frac{ y\cdot y^{\mu_2 - 1}}{(1 - y)^{\mu_2}} \cdots \frac{ y\cdot y^{\mu_{\ell(\mu)} - 1}}{(1 - y)^{\mu_{\ell(\mu)}}},
    \end{align*}
    where in the second equality above we introduce the variable $y$ for the purpose of using techniques of generating functions in our manipulations. Performing the indicated coefficient extraction, we obtain:
    \begin{align*}
       \binom{n - 2}{\mu_1 - 1}  &= \sum_{\substack{c_1 + \ldots + c_{\ell(\mu)} = n-1\\ c_{i} \geq 0}} \binom{c_1 - 1}{\mu_2 - 1} \cdots \binom{c_{\ell(\mu) - 1} - 1}{\mu_{\ell(\mu)} - 1}\\
    &= \sum_{\substack{\beta \composition n \\ \ell(\beta) = \ell(\mu) \\
    \beta_1 = 1}} \binom{\beta_2 - 1}{\mu_2 - 1} \cdots \binom{\beta_{\ell(\mu)} - 1}{\mu_{\ell(\mu)} - 1},
\end{align*}
 which is the coefficient of ${\bf x}^{\mu}$ on the right-hand side of Equation~\eqref{eq::main_result_proof_multivar_identity}.

 Now suppose the statement is true for partitions of $n$ with $k$ parts, and consider the case of $\alpha = \alpha_1\cdots \alpha_{k+1} \composition n$. Consider the coefficient of ${\bf x}^{\mu}$ in the left-hand side of Equation~\eqref{eq::main_result_proof_multivar_identity} for $\mu = \mu_1 \ldots \mu_t \composition n$. Furthermore, consider the prefix composition $\alpha' := \alpha_1\cdots \alpha_{k} \composition n - \alpha_{k+1}$. Starting with the left-hand side of Equation~\eqref{eq::main_result_proof_multivar_identity} we have:
 \begin{multline*}
     \sum_{\beta \leqc \alpha} x_1^{|\alpha| - |\beta|}\left(\prod_{i = 1}^{\ell(\alpha)}\binom{\alpha_i - 1}{\alpha_i - \beta_i}\right)  P_{\beta}(x_1, x_2, \ldots, x_{|\beta|})  \\= \sum_{i = 0}^{\alpha_{k+1}} \binom{\alpha_{k+1}}{i} x_1^i\sum_{\beta \leqc \alpha'} x_1^{|\alpha'| - |\beta|}\left(\prod_{i = 1}^{\ell(\alpha')}\binom{\alpha_i - 1}{\alpha_i - \beta_i}\right)  P_{\beta i}(x_1, x_2, \ldots, x_{|\beta i|}),
 \end{multline*}
 where we use $\beta i$ to mean the composition $\beta$ with letter $i$ concatenated to the end. By applying the inductive hypothesis to the inner sum, we may conclude that the sum of coefficients of terms ${\bf x}^{\mu'}$ where $\mu'$ is the prefix $\mu' := \mu_1 \cdots \mu_{t-1}$ is equal to:
 \begin{multline*}
     \sum_{i = 0}^{\alpha_{k+1} - 1}\binom{\alpha_{k+1} - 1}{i}\sum_{m_1 + \ldots m_{\ell(\alpha) - 1} = \ell(\mu) - 1 - i}\binom{\alpha_1 - 2}{\mu_2 + \ldots + \mu_{m_1 + 1} - 1}\cdot \binom{\alpha_2 - 1}{\mu_{m_1 + 2} + \ldots + \mu_{m_1 + m_2 + 1} - 1} \\ \cdots \binom{\alpha_{\ell(\alpha) - 1} - 1}{\mu_{m_1 + \ldots m_{\ell(\alpha) - 2}} + \ldots + \mu_{m_1 + \ldots m_{\ell(\alpha) - 1}} - 1}.
 \end{multline*}
 The terms from the sum above will contribute to the coefficient of ${\bf x}^{\mu}$ if and only if $\mu_{m_1 + \ldots m_{\ell(\alpha) - 1}} + \ldots + \mu_{m_1 + \ldots m_{\ell(\alpha) }} - 1 = i$. Hence, the coefficient of ${\bf x}^{\mu}$ in the left-hand side of Equation~\eqref{eq::main_result_proof_multivar_identity} is equal to:
 \begin{align*}
     \sum_{m_1 + \ldots +m_{\ell(\alpha)} = \ell(\mu) - 1}\binom{\alpha_1 - 2}{\mu_2 + \ldots + \mu_{m_1 + 1} - 1}\cdot \binom{\alpha_2 - 1}{\mu_{m_1 + 2} + \ldots + \mu_{m_1 + m_2 + 1} - 1} \cdots \binom{\alpha_{\ell(\alpha)} - 1}{\mu_{m_1 + \ldots m_{\ell(\alpha) - 1}} + \ldots + \mu_{m_1 + \ldots m_{\ell(\alpha) }} - 1}.
 \end{align*}
  On the other hand,  we may think of each binomial coefficient in the product of each term above as the coefficient extraction of a generating function:
\begin{equation*}
    \binom{\alpha_{\ell(\alpha) - 1} - 1}{\mu_{m_1 + \ldots m_{\ell(\alpha) - 2}} + \ldots + \mu_{m_1 + \ldots m_{\ell(\alpha) - 1}} - 1} = [y^{\alpha_{\ell(\alpha)} - 1}]\frac{y^{\mu_{m_1 + \ldots m_{\ell(\alpha) - 1}} + \ldots + \mu_{m_1 + \ldots m_{\ell(\alpha) }} - 1 }}{(1 - y)^{\mu_{m_1 + \ldots + m_{\ell(\alpha) - 1}} + \ldots + \mu_{m_1 + \ldots +m_{\ell(\alpha) }}}} .
\end{equation*}
  After making this substitution, the above sum becomes:
 \begin{align*}
     &\sum_{m_1 + \ldots + m_{\ell(\alpha)} = \ell(\mu) - 1} [y^{\alpha_1 - 1}]\frac{y\cdot y^{\mu_2 - 1}}{(1 - y)^{\mu_2}} \cdots \frac{y\cdot y^{\mu_{m_1 + 1} - 1}}{(1 - y)^{\mu_{m_1 + 1}}}    \cdots  [y^{\alpha_{\ell(\alpha)} }]  \frac{y\cdot y^{\mu_{m_1 + \ldots m_{\ell(\alpha) - 1}}}}{(1 - y)^{\mu_{m_1 + \ldots m_{\ell(\alpha) - 1}}}} \cdots \frac{y\cdot y^{\mu_{m_1 + \ldots +m_{\ell(\alpha)}}}}{(1-y)^{\mu_{m_1 + \ldots +m_{\ell(\alpha)}}}}.
\end{align*}
We make special note of the fact that the coefficient of the first index in each term is $[y^{\alpha_1 - 1}]$, whereas the coefficient of every other index is of the form $[y^{\alpha_i}]$. After manipulation, the above sum simplifies to: 
$$\sum_{\substack{\beta \geqa \alpha \\ \ell(\beta) = \ell(\mu) \\ \beta_1 = 1}} \prod_{i = 2}^{\ell(\beta)} [y^{\beta_i - 1}] \frac{y^{\mu_i - 1}}{(1 - y)^{\mu_i}},$$ 
which is the coefficient of ${\bf x}^{\mu}$ in the right-hand side of Equation~\eqref{eq::main_result_proof_multivar_identity}.

\end{proof}

\end{document}